\theoremstyle{definition}
\newtheorem{thm}{Theorem}[section]
\newtheorem{lem}[thm]{Lemma}
\newtheorem{prop}[thm]{Proposition}
\theoremstyle{definition}
\newtheorem{rem}[thm]{Remark}
\newtheorem{defn}[thm]{Definition}
\numberwithin{equation}{section}
\def\F{{\mathbb F}}
\def\Q{{\mathbb Q}}
\def\Z{{\mathbb Z}}
\def\O{{\mathscr O}}
\def\Br{\mathop{\mathrm{Br}}\nolimits}
\def\Gal{\mathop{\mathrm{Gal}}\nolimits}
\def\Im{\mathop{\mathrm{Im}}\nolimits}
\def\Ker{\mathop{\mathrm{Ker}}\nolimits}
\def\Pic{\mathop{\mathrm{Pic}}\nolimits}
\numberwithin{equation}{section}
\title[Brauer groups of K3 surfaces in characteristic 2]{Finiteness of Brauer groups of K3 surfaces in characteristic 2}
\address{Department of Mathematics, Faculty of Science, 
Kyoto University, Kyoto 606-8502, Japan}
\email{kito@math.kyoto-u.ac.jp} 
\date{\today}
\subjclass[2010]{Primary 14J28 ; Secondary 14F22}
\keywords{$K3$ surface, Brauer group, Picard group, Tate conjecture}
\author{Kazuhiro Ito}
\begin{document}
\maketitle
\begin{abstract}
For a $K3$ surface over a field of characteristic $2$ which is finitely generated over its prime subfield, we prove that the cokernel of the natural map from the Brauer group of the base field to that of the $K3$ surface is finite modulo the $2$-primary torsion subgroup. In characteristic different from $2$, such results were previously proved by A.\ N.\ Skorobogatov and Y.\ G.\ Zarhin. We basically follow their methods with an extra care in the case of superspecial  $K3$ surfaces using the recent results of W.\ Kim and K.\ Madapusi Pera on the Kuga-Satake construction and the Tate conjecture for $K3$ surfaces in characteristic $2$. \end{abstract}

\section{Introduction}\label{intro}
For a $K3$ surface $X$ over a field $k$, let $\Br(X):=H^2_{\mathrm{\acute{e}t}}(X, \mathbb{G}_{m})$ be the Brauer group of $X$ \cite{Grothendieck1}, \cite{Grothendieck2}. There is a natural map from the Brauer group $\Br(k)$ of $k$ to $\Br(X)$. The Brauer group $\Br(X)$ plays an important role in the study of the arithmetic of the $K3$ surface $X$; see \cite[Introduction]{Skorobogatov-Zarhin2}. \par
When $k$ is a field of characteristic $0$ which is finitely generated over its prime subfield, Skorobogatov and Zarhin proved that the cokernel of $\Br(k)\rightarrow\Br(X)$ is finite \cite[Theorem 1.2]{Skorobogatov-Zarhin2}. They also proved that when $k$ is a field of characteristic $p\geq3$ which is finitely generated over its prime subfield, the cokernel of $\Br(k)\rightarrow\Br(X)$ is finite modulo the $p$-primary torsion subgroup \cite[Theorem 1.3]{Skorobogatov-Zarhin}. \par
The purpose of this paper is to prove their results in characteristic $2$.
The main result of this paper is as follows. 

\begin{thm}\label{main2}
Let $k$ be a field of characteristic $2$ which is finitely generated over its prime subfield. For a $K3$ surface $X$ over $k$, the cokernel of the natural map $\Br(k)\rightarrow\Br(X)$ is finite modulo the $2$-primary torsion subgroup.
\end{thm}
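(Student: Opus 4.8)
The plan is to follow the Skorobogatov--Zarhin strategy, reducing the finiteness of the cokernel of $\Br(k) \to \Br(X)$ to statements about Galois cohomology and Picard groups, with the characteristic-$2$ subtleties handled via the Kuga--Satake construction and the Tate conjecture. First I would reduce to the case where $k$ is finitely generated over $\F_2$, since the characteristic-$2$ prime field is $\F_2$. Passing to the separable closure $\bar k$, the Hochschild--Serre spectral sequence for $X_{\bar k} \to X$ relates $\Br(X)$ to $\Br(k)$, to $H^1(k, \Pic(X_{\bar k}))$, and to the Galois-invariants $\Br(X_{\bar k})^{\Gal(\bar k/k)}$. The key point is that, working away from the $2$-primary part, it suffices to control (i) the prime-to-$2$ part of the Galois-fixed transcendental Brauer classes $(\Br(X_{\bar k})^{\Gamma})\{2'\}$ and (ii) the group $H^1(k, \Pic(X_{\bar k}))$, where $\Gamma = \Gal(\bar k/k)$.

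For the term $H^1(k, \Pic(X_{\bar k}))$, I would use that $\NS(X_{\bar k}) = \Pic(X_{\bar k})$ is a finitely generated abelian group on which $\Gamma$ acts through a finite quotient, so that $H^1(k, \Pic(X_{\bar k}))$ is finite by the standard finiteness of $H^1$ of a finite group acting on a finitely generated module (after noting the action factors through an open subgroup acting trivially). For the transcendental part, the main input is that $\Br(X_{\bar k})\{2'\}$, the prime-to-$2$ torsion, is identified via the Kummer sequence with a quotient of $\bigoplus_{\ell \neq 2} H^2_{\et}(X_{\bar k}, \Z_\ell(1))/\NS$ tensored with $\Q_\ell/\Z_\ell$, and the Galois action on the $\ell$-adic cohomology is what must be understood. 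The heart of the argument is to show that the $\Gamma$-invariants of these transcendental classes are finite away from $2$; this is where the Tate conjecture enters, guaranteeing that the algebraic classes exhaust the Galois-invariant part of $H^2_{\et}(X_{\bar k}, \Q_\ell(1))$, so that no transcendental class can be $\Gamma$-invariant up to a finite group.

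The hard part, and the reason characteristic $2$ requires extra care, is the case of \emph{superspecial} $K3$ surfaces. For ordinary or generic $K3$ surfaces the $\ell$-adic monodromy (equivalently, the Zariski closure of the image of $\Gamma$ in the orthogonal group of the transcendental lattice) is large enough that invariants are forced to be algebraic by a semisimplicity-plus-Tate argument, but in the supersingular (and especially superspecial) case the Picard number jumps to $22$ and the usual transcendental lattice degenerates, so the classical monodromy argument does not directly apply. Here I would invoke the recent work of W.\ Kim and K.\ Madapusi Pera establishing the Kuga--Satake construction and the Tate conjecture for $K3$ surfaces in characteristic $2$: the Kuga--Satake abelian variety lets one transport the finiteness statement to a question about endomorphisms and Tate classes of an abelian variety, where finiteness of the relevant Galois-invariant Brauer classes follows from the Tate conjecture for divisors on abelian varieties (Tate, Faltings, Zarhin) together with finiteness of the image of $\Gamma$ in the appropriate orthogonal group. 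Assembling these, one concludes that the cokernel of $\Br(k) \to \Br(X)$ is finite modulo its $2$-primary torsion subgroup, which is precisely the assertion of Theorem~\ref{main2}.
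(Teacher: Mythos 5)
Your global skeleton agrees with the paper's: reduce via the Hochschild--Serre spectral sequence to the finiteness of $H^1(k,\Pic(X_{k^{\mathrm{sep}}}))$ and of the prime-to-$2$ part of $\Br(X_{k^{\mathrm{sep}}})^{\Gal(k^{\mathrm{sep}}/k)}$, then attack the latter through the Kummer sequence, semisimplicity, and the Tate conjecture. One point you gloss over: finiteness of each $\ell$-primary piece (which is what the $\Q_\ell$-Tate-plus-semisimplicity argument gives, one $\ell$ at a time) does not by itself bound the whole prime-to-$2$ part; you also need the vanishing $(\Br(X_{k^{\mathrm{sep}}})[\ell])^{\Gal(k^{\mathrm{sep}}/k)}=0$ for all but finitely many $\ell\neq2$, and that requires the mod-$\ell$ statements --- semisimplicity of $H^2_{\et}(X_{k^{\mathrm{sep}}},\mu_\ell)$ and surjectivity of the cycle class map with $\mu_\ell$-coefficients, i.e.\ part (2) of Theorem~\ref{main1} --- not just the $\Q_\ell$-statements.

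The genuine gap is your treatment of the superspecial case, which is precisely the characteristic-$2$ difficulty the paper was written to address. You propose to invoke the Kim--Madapusi Pera Kuga--Satake construction for superspecial surfaces, but their construction in characteristic $2$ \emph{excludes} polarized pairs $(X,\mathscr{L})$ with ${\mathrm{ch_{dR}}}(\mathscr{L})\in F^2H^2_{\mathrm{dR}}(X)$, and these are exactly the superspecial ones; indeed it is not known whether the Kuga--Satake morphism extends over all of the moduli stack (Remark~\ref{superspecial point}). You also have the division of labor reversed: in the paper, the Kuga--Satake abelian variety (together with Zarhin's results on torsion points of abelian varieties) is what yields semisimplicity for the \emph{non-supersingular} surfaces --- there is no independent ``large monodromy'' argument for them --- whereas the supersingular case, which contains the superspecial one by Proposition~\ref{superspecial}, is the \emph{easy} case: the Tate conjecture of Kim--Madapusi Pera \cite{Kim-Madapusi}, valid for all $K3$ surfaces in characteristic $2$ including superspecial ones, forces geometric Picard rank $22$, so the Galois action on $H^2_{\et}(X_{k^{\mathrm{sep}}},\Q_\ell(1))\simeq\Pic(X_{k^{\mathrm{sep}}})\otimes_{\Z}\Q_\ell$ factors through a finite quotient, and both semisimplicity and the mod-$\ell$ Tate statement follow by elementary lattice arguments (Lemma~\ref{supersingular other proof}). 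Your route could be repaired: by Lemma~\ref{ample} (the characteristic-$2$ extension of a lemma of Esnault--Oguiso, itself a consequence of the Tate conjecture) one can, after a finite separable extension, choose a primitive ample $\mathscr{L}$ with ${\mathrm{ch_{dR}}}(\mathscr{L})\notin F^2H^2_{\mathrm{dR}}$, and for such a pair the Kuga--Satake abelian variety does exist (Remark~\ref{other proof}); but as written, your proposal assumes the construction exactly where it is unavailable.
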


We basically follow the methods of Skorobogatov and Zarhin with an extra care in the case of superspecial $K3$ surfaces.
 We first prove the following theorem on the Tate conjecture for $K3$ surfaces for $\ell$-adic and torsion coefficients.

\begin{thm}\label{main1}
Let $k$ be a field of characteristic $2$ which is finitely generated over its prime subfield, and $X$ a $K3$ surface over $k$. Let $k^{\mathrm{sep}}$ be a separable closure of $k$.
\begin{enumerate}
\item For $\ell \neq 2$, the $\Gal(k^{\mathrm{sep}}/k)$-module $H^2_{\mathrm{\acute{e}t}}(X_{k^{\mathrm{sep}}}, \Q_{\ell}{(1)})$ is semisimple.
\item For all but finitely many $\ell \neq 2$, the $\Gal(k^{\mathrm{sep}}/k)$-module $H^2_{\mathrm{\acute{e}t}}(X_{k^{\mathrm{sep}}}, \mu_{\ell})$ is semisimple and the cycle class map for $\ell$-torsion coefficients
$$\Pic(X)\rightarrow H^2_{\mathrm{\acute{e}t}}(X_{k^{\mathrm{sep}}}, \mu_{\ell})^{\Gal(k^{\mathrm{sep}}/k)}$$
is surjective.
\end{enumerate}
\end{thm}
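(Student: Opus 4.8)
The plan is to prove part~(1) by splitting $V_\ell := H^2_{\et}(X_{k^{\sep}}, \Q_\ell(1))$ into its algebraic and transcendental parts and treating each separately, and then to deduce part~(2) from part~(1) together with the $\Q_\ell$-coefficient Tate conjecture of Kim and Madapusi Pera by an integral reduction argument in the style of Skorobogatov and Zarhin. For part~(1), I would first use that the cup-product pairing makes $V_\ell$ into a self-dual $\Gal(k^{\sep}/k)$-module (Poincar\'e duality). Let $N_\ell$ denote the image of $\NS(X_{k^{\sep}}) \otimes \Q_\ell$ under the cycle class map. A polarization defined over some finite extension of $k$ is fixed by an open subgroup of $\Gal(k^{\sep}/k)$, and its stabilizer in the orthogonal group of the N\'eron--Severi lattice is finite, being the stabilizer of a vector of positive self-intersection; hence the Galois action on $\NS(X_{k^{\sep}})$, and so on $N_\ell$, factors through a finite quotient, and $N_\ell$ is a semisimple Galois module. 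Since the N\'eron--Severi lattice of a $K3$ surface is non-degenerate, the pairing restricts non-degenerately to $N_\ell$, yielding a Galois-equivariant orthogonal decomposition $V_\ell = N_\ell \oplus T_\ell$ with $T_\ell := N_\ell^{\perp}$. It then remains to prove that the transcendental part $T_\ell$ is semisimple.

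For $T_\ell$ I would distinguish two cases according to the height of the formal Brauer group. When $X$ is of finite height, the Kuga--Satake construction in characteristic $2$ (Kim--Madapusi Pera) provides an abelian variety $A$, over a finite extension of $k$, together with a Galois-equivariant embedding of the primitive cohomology, which contains $T_\ell$, into $\End_{\Q_\ell}(H^1_{\et}(A_{k^{\sep}}, \Q_\ell))$; passing to a finite extension does not affect semisimplicity. Since $H^1_{\et}(A_{k^{\sep}}, \Q_\ell)$ is a semisimple Galois module over a finitely generated field of positive characteristic (Tate, Zarhin) and its algebraic monodromy group is reductive, every tensor construction on it is semisimple, and hence so is the submodule $T_\ell$. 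When $X$ is supersingular, the Tate conjecture for $K3$ surfaces in characteristic $2$ gives geometric Picard number $\rho = 22 = b_2$, so $N_\ell = V_\ell$ and $T_\ell = 0$; semisimplicity is then immediate. The superspecial $K3$ surfaces (Artin invariant $1$) lie in this second case, and it is precisely here that the Kuga--Satake machinery is unavailable, so that the input one must use is Kim and Madapusi Pera's Tate conjecture, handled with extra care.

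For part~(2) I would, following Skorobogatov and Zarhin, upgrade the $\Q_\ell$-Tate conjecture to the integral and mod~$\ell$ levels. Write $\Lambda := H^2_{\et}(X_{k^{\sep}}, \Z_\ell(1))$, which is torsion-free for $\ell \neq 2$ since $K3$ surfaces have torsion-free $\ell$-adic cohomology, so that $\Lambda/\ell \cong H^2_{\et}(X_{k^{\sep}}, \mu_\ell)$; let $N := \NS(X_{k^{\sep}}) \otimes \Z_\ell \hookrightarrow \Lambda$. Part~(1) and the $\Q_\ell$-Tate conjecture give $\Lambda^{\Gal} \otimes \Q_\ell = N \otimes \Q_\ell$. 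For all $\ell$ outside a finite set --- those dividing the discriminant of $\NS(X_{k^{\sep}})$, the order of the finite Galois image on $\NS(X_{k^{\sep}})$, the finite index $[\Lambda^{\Gal} : N]$, and the order of the finite cokernel of $\Pic(X) \to \NS(X_{k^{\sep}})^{\Gal}$ --- one has $\Lambda^{\Gal} = N$ with $N$ saturated. Reducing $0 \to N \to \Lambda \to \Lambda/N \to 0$ modulo $\ell$ and taking invariants then identifies the image of $\Pic(X)$ with $(\Lambda/\ell)^{\Gal} = H^2_{\et}(X_{k^{\sep}}, \mu_\ell)^{\Gal}$, provided $((\Lambda/N)/\ell)^{\Gal} = 0$, equivalently $H^1(\Gal(k^{\sep}/k), \Lambda/N)[\ell] = 0$. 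This vanishing and the semisimplicity of $H^2_{\et}(X_{k^{\sep}}, \mu_\ell)$ hold for all but finitely many $\ell$ by standard largeness properties of the mod~$\ell$ Galois representations in the compatible system attached to the Kuga--Satake abelian variety, as exploited by Skorobogatov and Zarhin.

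The main obstacle is concentrated in the superspecial case in characteristic~$2$. The whole argument rests on the Tate conjecture, and for superspecial $K3$ surfaces the Kuga--Satake and integral canonical model techniques that establish it elsewhere degenerate, so one must rely on Kim and Madapusi Pera's results precisely in this range and verify by hand that they suffice; fortunately the discriminant of the supersingular N\'eron--Severi lattice is a power of $2$, so it introduces no odd bad primes in part~(2). The remaining delicate point is the uniform control of the finite exceptional set of $\ell$ in part~(2), and in particular the absence of trivial Galois subquotients in the mod~$\ell$ reductions for almost all $\ell$; this is exactly where the methods of Skorobogatov and Zarhin, applied to the compatible system coming from the Kuga--Satake abelian variety, are needed.
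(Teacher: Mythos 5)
Your proposal is correct and is essentially the paper's own proof: the paper makes the same case split into the supersingular case (handled directly via the finite Galois action on the rank-$22$ N\'eron--Severi lattice; this is its Lemma 3.2, with superspecial $\Rightarrow$ supersingular supplied by its Proposition 2.5) and the finite-height case (handled via the Kim--Madapusi Pera Kuga--Satake embedding together with Zarhin's characteristic-$2$ results, citing Propositions 4.1 and 4.2 of Skorobogatov--Zarhin, which are precisely the reductive-monodromy semisimplicity argument and the mod-$\ell$ invariant-vanishing ``largeness'' argument you sketch), and it likewise proves part (2) by the integral algebraic/transcendental decomposition for $\ell$ prime to the discriminant of $\Pic(X_{k^{\mathrm{sep}}})$. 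One notational slip worth fixing: in part (2) the statements $\Lambda^{\Gal}\otimes_{\Z_\ell}\Q_\ell=N\otimes_{\Z_\ell}\Q_\ell$, $[\Lambda^{\Gal}:N]$, and $\Lambda^{\Gal}=N$ should have $N^{\Gal}$ in place of $N$ (Galois acts nontrivially on $\NS(X_{k^{\mathrm{sep}}})$ in general), as your own list of excluded primes and your use of the cokernel of $\Pic(X)\rightarrow\NS(X_{k^{\mathrm{sep}}})^{\Gal}$ make clear.
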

Note that the cycle class map for $\Q_{\ell}$-coefficients (for any $\ell \neq 2$)
$$\Pic(X)\otimes_{\Z}{\Q_{\ell}}\simeq H^2_{\mathrm{\acute{e}t}}(X_{k^{\mathrm{sep}}}, \Q_{\ell}{(1)})^{\Gal(k^{\mathrm{sep}}/k)}$$
is surjective by the Tate conjecture for $K3$ surfaces in characteristic $2$ recently proved by Kim and Madapusi Pera \cite[Theorem A.1]{Kim-Madapusi}. (Previously, in characteristic $2$, the Tate conjecture was known only for ordinary $K3$ surfaces over finite fields \cite[Corollary 3.4]{Nygaard}.)

When the characteristic of $k$ is different from $2$, in \cite{Skorobogatov-Zarhin2}, \cite{Skorobogatov-Zarhin}, Skorobogatov and Zarhin proved Theorem \ref{main2} and Theorem \ref{main1} using the Kuga-Satake abelian varieties of $K3$ surfaces studied by Deligne, Pjatecki{\u\i}-{\v{S}}apiro, {\v{S}}afarevi{\v{c}}, and Andr$\mathrm{\acute{e}}$ in characteristic $0$ and by Rizov and Madapusi Pera in odd positive characteristics \cite{Madapusi}. They also used the results of Faltings (in characteristic $0$) and Zarhin (in positive characteristics) on the Galois action on torsion points of abelian varieties. (Zarhin's results are valid also in characteristic $2$; see \cite{Zarhin}.) Recently, in \cite{Kim-Madapusi}, Kim and Madapusi Pera extended the Kuga-Satake construction in \cite{Madapusi} to characteristic $2$, but they excluded polarized $K3$ surfaces $(X, \mathscr{L})$ satisfying ${\mathrm{ch_{dR}}}(\mathscr{L})\in F^2H^2_{\mathrm{dR}}(X)$. Such $K3$ surfaces are superspecial. (See Section \ref{sectionsuperspecial} for the definition of superspecial $K3$ surfaces.) Hence the methods of Skorobogatov and Zarhin work also for non-superspecial $K3$ surfaces in characteristic $2$. On the other hand, since the Tate conjecture was proved also for superspecial $K3$ surfaces in characteristic $2$ in \cite{Kim-Madapusi}, superspecial $K3$ surfaces have geometric Picard number $22$; it is easy to show Theorem \ref{main1} for superspecial (more generally, supersingular) $K3$ surfaces directly; see Lemma \ref{supersingular other proof}. Once Theorem \ref{main1} is established, we can prove Theorem \ref{main2} by the same methods as in \cite{Skorobogatov-Zarhin}. \par
The outline of this paper is as follows. In Section \ref{sectionsuperspecial}, we recall some well-known facts on superspecial $K3$ surfaces in characteristic $p>0$. We also prove that superspecial $K3$ surfaces in characteristic $2$ can be lifted to the ring of Witt vectors.  In Section \ref{proof of main1}, we prove Theorem \ref{main1}. In Section \ref{proof of main2}, we prove Theorem \ref{main2}. 

\begin{rem}
It seems an interesting but difficult problem to ask whether the $p$-primary torsion subgroup of the cokernel of $\Br(k)\rightarrow\Br(X)$ is finite, where  
 $k$ is an {\em infinite} field of characteristic $p>0$ which is finitely generated over its prime subfield and $X$ is a $K3$ surface over $k$. On the other hand, if $k$ is {\em finite}, the Brauer group $\Br(X)$ is finite by the Tate conjecture; see \cite[Theorem 4.1]{Milne}. (Note that Milne's result \cite[Theorem 4.1]{Milne} is stated under the additional condition $p \neq 2$ but the result is valid for any $p$; see \cite{Milne2}. See also \cite[footnote in p.\,674]{Liu}.) 
\end{rem}

\section{Superspecial $K3$ surfaces}\label{sectionsuperspecial}
In this section, we fix a prime number $p$, and an algebraically closed field $k$ of characteristic $p>0.$  
Recall that a \textit{$K3$ surface} $X$ over $k$ is a projective smooth surface with trivial canonical bundle and  $H^1(X,\O_X)=0$. Let $X$ be a $K3$ surface over $k$. Let $F^{\bullet}H^n_{\mathrm{dR}}(X)$ be the Hodge filtration on the de Rham cohomology $H^n_{\mathrm{dR}}(X)$ coming from the Hodge spectral sequence:
$$E^{ij}_{1}=H^j(X, \Omega^i_{X/k})\Rightarrow H^{n}_{\mathrm{dR}}(X),$$
which degenerates at $E_1$ \cite[Proposition 1.1]{Deligne}.
It is also equipped with the conjugate filtration $F^{\bullet}_{\mathrm{conj}}H^n_{\mathrm{dR}}(X)$ coming from the conjugate spectral sequence:
$$E^{ij}_{2}=H^i(X, \mathscr{H}^j(\Omega^{\bullet}_{X/k}))\Rightarrow H^{n}_{\mathrm{dR}}(X),$$
which degenerates at $E_2$ \cite[Section 1.1]{Ogus}. Hence $F^{\bullet}H^2_{\mathrm{dR}}(X)$ and $F^{\bullet}_{\mathrm{conj}}H^2_{\mathrm{dR}}(X)$ are of the following form:
$$0=F^3H^2_{\mathrm{dR}}(X)\subset F^2H^2_{\mathrm{dR}}(X) \subset F^1H^2_{\mathrm{dR}}(X) \subset F^0H^2_{\mathrm{dR}}(X)=H^2_{\mathrm{dR}}(X),$$ 
$$0=F^3_{\mathrm{conj}}H^2_{\mathrm{dR}}(X)\subset F^2_{\mathrm{conj}}H^2_{\mathrm{dR}}(X) \subset F^1_{\mathrm{conj}}H^2_{\mathrm{dR}}(X) \subset F^0_{\mathrm{conj}}H^2_{\mathrm{dR}}(X)=H^2_{\mathrm{dR}}(X).$$ 
For $i=0, 1, 2$, we have
$$F^iH^2_{\mathrm{dR}}(X)/F^{i+1}H^2_{\mathrm{dR}}(X)\simeq H^{2-i}(X, \Omega^i_{X/k}),$$
$$F^i_{\mathrm{conj}}H^2_{\mathrm{dR}}(X)/F^{i+1}_{\mathrm{conj}}H^2_{\mathrm{dR}}(X)\simeq H^i(X, \mathscr{H}^{2-i}(\Omega^{\bullet}_{X/k})).$$\par

 Let $W:=W(k)$ be the ring of Witt vectors of $k$. The Frobenius morphism $\sigma \colon k \rightarrow k$, $\sigma(x)=x^p$ induces a ring homomorphism on $W$. We also denote it by $\sigma \colon W \rightarrow W$. For a $K3$ surface $X$ over $k$, the crystalline cohomology $H^2_{\mathrm{cris}}(X/W)$ is a free $W$-module of rank $22$ equipped with a perfect pairing
$$\langle \ , \ \rangle \colon H^2_{\mathrm{cris}}(X/W) \times H^2_{\mathrm{cris}}(X/W) \rightarrow W.$$ The absolute Frobenius morphism on $X$ induces a $\sigma$-linear map $$\phi \colon H^2_{\mathrm{cris}}(X/W) \rightarrow H^2_{\mathrm{cris}}(X/W),$$
satisfying  
$\langle \phi(x), \phi(y) \rangle=p^2\langle x, y \rangle $
for any $x, y \in H^2_{\mathrm{cris}}(X/W).$ 
Let $$\pi \colon H^2_{\mathrm{cris}}(X/W) \rightarrow H^2_{\mathrm{dR}}(X)$$ be the reduction map. 
\begin{lem}[Mazur]\label{Mazur}
The following are satisfied. 
\begin{enumerate}
\item $\pi(\phi^{-1}pH^2_{\mathrm{cris}}(X/W))=F^1H^2_{\mathrm{dR}}(X)$ 
\item $\pi(\phi^{-1}p^2H^2_{\mathrm{cris}}(X/W))=F^2H^2_{\mathrm{dR}}(X) $
\item $\pi(p^{-1}(\Im\phi \cap pH^2_{\mathrm{cris}}(X/W)))=F^1_{\mathrm{conj}}H^2_{\mathrm{dR}}(X)$
\item $\pi(\Im\phi)=F^2_{\mathrm{conj}}H^2_{\mathrm{dR}}(X)$
\end{enumerate}
\end{lem}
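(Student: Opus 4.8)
The map $\pi$ is reduction modulo $p$: under the canonical isomorphism $H^2_{\mathrm{cris}}(X/W)/pH^2_{\mathrm{cris}}(X/W)\simeq H^2_{\mathrm{dR}}(X)$ it is the projection with kernel $pH$, where I write $H:=H^2_{\mathrm{cris}}(X/W)$. The plan is to split the statement into its Hodge part (1),(2), handled by Mazur's strong divisibility, its conjugate part (4), handled by the Cartier isomorphism, and then to deduce (3) from (4) by Poincaré duality. First I would check that each of the four subsets of $H$ on the left is a $W$-submodule containing $pH$, so that applying $\pi$ makes sense; for instance $\phi^{-1}p^iH=\{x\in H:\phi(x)\in p^iH\}$ is a submodule because $\phi$ is additive and $\sigma$-linear with $\sigma(p)=p$, and it contains $pH$ since $\phi(pH)=p\phi(H)\subseteq pH$.

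For (1) and (2) the essential input is Mazur's theorem on Frobenius and the Hodge filtration: since the Hodge--de Rham spectral sequence degenerates and the cohomology is torsion free, there is a lift $\widetilde F^\bullet$ of $F^\bullet H^2_{\mathrm{dR}}(X)$ to a filtration of $H$ by direct summands satisfying the strong divisibility conditions $\phi(\widetilde F^i)\subseteq p^iH$ and $\sum_i p^{-i}\phi(\widetilde F^i)=H$. I would fix a $W$-basis $e_1,\dots,e_{22}$ adapted to $\widetilde F^\bullet$, with $\widetilde F^2=We_1$, $\widetilde F^1=We_1\oplus\cdots\oplus We_{21}$, $\widetilde F^0=H$ (Hodge weights $2,1,\dots,1,0$); strong divisibility says precisely that $f_1:=p^{-2}\phi(e_1)$, $f_i:=p^{-1}\phi(e_i)$ for $2\le i\le 21$, and $f_{22}:=\phi(e_{22})$ form a $W$-basis of $H$. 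Expanding $\phi(\sum_i a_ie_i)=a_1^\sigma p^2f_1+\sum_{i=2}^{21}a_i^\sigma pf_i+a_{22}^\sigma f_{22}$, the condition $\phi(x)\in pH$ becomes $a_{22}\in pW$, while $\phi(x)\in p^2H$ becomes $a_{22}\in p^2W$ together with $a_2,\dots,a_{21}\in pW$; reducing modulo $p$, these cut out exactly $\pi(\widetilde F^1)=F^1$ and $\pi(\widetilde F^2)=F^2$, which is (1) and (2).

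For (4) I would use that under $\pi$ the reduction $\phi\otimes_W k$ is the $\sigma$-linear endomorphism $\bar\phi$ of $H^2_{\mathrm{dR}}(X)$ induced by the absolute Frobenius $F\colon X\to X$, so $\pi(\Im\phi)=\Im\bar\phi$. Since $F^*$ acts by $a\mapsto a^p$ on $\O_X$ and annihilates $\Omega^{\ge 1}_{X/k}$ (as $d(a^p)=0$), on the de Rham complex it factors through $\mathscr H^0(\Omega^\bullet_{X/k})$ placed in degree $0$; hence on $H^2$ its image lies in the image of the edge map $H^2(\mathscr H^0(\Omega^\bullet_{X/k}))\to H^2_{\mathrm{dR}}(X)$, which by the $E_2$-degeneration of the conjugate spectral sequence is exactly $F^2_{\mathrm{conj}}H^2_{\mathrm{dR}}(X)$, and the Cartier isomorphism identifies the resulting endomorphism of $H^2(\mathscr H^0(\Omega^\bullet_{X/k}))$ with a $\sigma$-linear automorphism, so $\bar\phi$ is onto that image. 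Thus $\Im\bar\phi=F^2_{\mathrm{conj}}H^2_{\mathrm{dR}}(X)$, giving (4). Finally I would deduce (3) from (4) by duality: if $pz=\phi(w)$, then $p\langle z,\phi y\rangle=\langle\phi w,\phi y\rangle=p^2\langle w,y\rangle$, whence $\langle z,\phi y\rangle=p\langle w,y\rangle\in pW$, so $\pi(p^{-1}(\Im\phi\cap pH))\subseteq(\pi\Im\phi)^\perp=(F^2_{\mathrm{conj}})^\perp$ for the perfect pairing induced by $\langle\,,\,\rangle$ on $H^2_{\mathrm{dR}}(X)$. As $F^2_{\mathrm{conj}}H^2_{\mathrm{dR}}(X)\cup F^1_{\mathrm{conj}}H^2_{\mathrm{dR}}(X)\subseteq F^3_{\mathrm{conj}}H^4_{\mathrm{dR}}(X)=0$, we get $F^1_{\mathrm{conj}}\subseteq(F^2_{\mathrm{conj}})^\perp$ with equality by dimensions; since the left-hand side also has dimension $21$ (by the basis above), (3) follows.

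The main difficulty is conceptual rather than computational: one must correctly invoke Mazur's strong divisibility, which captures the interaction of $\phi$ with the Hodge filtration in the absence of any lift of Frobenius, and one must identify $\phi\otimes_W k$ with the absolute Frobenius on de Rham cohomology and recognize, via the Cartier isomorphism, its factorization through $\mathscr H^0(\Omega^\bullet_{X/k})$. Once these two structural facts are granted, verifying that the four left-hand sets are $W$-submodules containing $pH$ and carrying out the linear algebra in the adapted basis are routine.
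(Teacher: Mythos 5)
Your proposal is mathematically sound, but it takes a genuinely different route from the paper: the paper offers no argument at all, its proof being a one-line citation of Berthelot--Ogus, Theorem 8.26, which \emph{is} Mazur's theorem in precisely the form of the lemma (both the Hodge-filtration statements (1), (2) and the conjugate-filtration statements (3), (4)). You instead derive the lemma from three inputs: the strong-divisibility form of Mazur's theorem (existence of a lift $\widetilde F^\bullet$ of the Hodge filtration with $\phi(\widetilde F^i)\subseteq p^iH$ and $\sum_i p^{-i}\phi(\widetilde F^i)=H$, writing $H:=H^2_{\mathrm{cris}}(X/W)$ as you do), which gives (1) and (2) by a correct computation in an adapted basis; the factorization of absolute Frobenius through $\mathscr{H}^0(\Omega^{\bullet}_{X/k})$ together with the degree-zero Cartier isomorphism and $E_2$-degeneration of the conjugate spectral sequence, which gives (4); and the compatibility $\langle\phi x,\phi y\rangle=p^2\langle x,y\rangle$ plus multiplicativity of the conjugate filtration ($F^1_{\mathrm{conj}}\cup F^2_{\mathrm{conj}}\subseteq F^3_{\mathrm{conj}}H^4_{\mathrm{dR}}(X)=0$) plus a dimension count in your basis, which gives (3). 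Each of these steps checks out, and your treatment of the conjugate half, (3) and (4), is a real argument rather than a citation. Be aware, however, that the strong-divisibility statement you invoke for (1), (2) is not weaker than what is being proved---it is the core of Mazur's theorem, essentially what Berthelot--Ogus establish on the way to Theorem 8.26---so for that half your proof translates one formulation of the theorem into another; this is legitimate provided you cite it as such, but it is not more elementary than the paper's citation.

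One slip, inessential but worth correcting: it is false that all four left-hand subsets contain $pH$. Your justification $\phi(pH)=p\phi(H)\subseteq pH$ only yields $pH\subseteq\phi^{-1}(pH)$; in your own adapted basis one has $\phi(pe_{22})=pf_{22}\notin p^2H$, so $pe_{22}\notin\phi^{-1}(p^2H)$, and likewise $pf_1\notin\Im\phi$, so neither $\phi^{-1}(p^2H)$ nor $\Im\phi$ contains $pH$. Nothing in your proof is damaged: the image under $\pi$ of any $W$-submodule is a $k$-subspace, which is all that is needed, and your explicit computations of these images never use the false containment.
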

\begin{proof}
See \cite[Theorem 8.26]{Berthelot-Ogus}. 
\end{proof}

We say a $K3$ surface $X$ over $k$ is \textit{supersingular} if the rank of $\Pic(X)$ is equal to $22$. The Tate conjecture for $K3$ surfaces \cite{Charles13}, \cite{Kim-Madapusi}, \cite{Madapusi}, \cite{Maulik} implies that $X$ is supersingular if and only if the height $h(X)$ of the formal Brauer group associated with $X$ is equal to $\infty$; see \cite[Corollaire 0.5]{Benoist}, \cite[Corollary 17.3.7]{Huybrechts}. We denote the dual lattice of $\Pic(X)$ by
\begin{align*}
\Pic(X)^{\vee} &:= {\mathrm{Hom}}_{\Z}(\Pic(X), \Z) \\
&\simeq \{\, x \in \Pic(X)\otimes_{\Z}{\Q} \mid (x, y) \in \Z \ {\mathrm{for\ any}}\ y \in \Pic(X)\,\},
\end{align*}
where $(\ , \ )$ is the intersection pairing on $\Pic(X)\otimes_{\Z}{\Q}$. 
For a supersingular $K3$ surface $X$ over $k$, it is well-known that $\Pic(X)$ is a $p$-elementary lattice. In other words, we have $p\Pic(X)^{\vee}\subset \Pic(X)$. Its discriminant is of the form $-p^{2a}$ for an integer $1\leq a \leq10$; see \cite[Proposition 1 in Section 8]{Rudakov-Shafarevich} and \cite[Theorem in Section 8]{Rudakov-Shafarevich}.
The integer $a$ is called the \textit{Artin invariant} of $X$. 

We fix the notation by the following commutative diagram:
$$\begin{CD}
                                                        &\Pic(X){\otimes}_{\Z}{W}   @>{\mathrm{ch_{cris}}}>>  &H^2_{\mathrm{cris}}(X/W) \\
                                                                             &@V{\pi_{\Pic}}VV                        &@V{\pi}VV                     \\
N:=\Ker({\mathrm{ch_{dR}}})\subset &\Pic(X){\otimes}_{\Z}{k} @>{\mathrm{ch_{dR}}}>>      &H^2_{\mathrm{dR}}(X)  \\
\end{CD}$$
We note that ${\mathrm{ch_{cris}}}$ is injective and preserves pairings. Hence we have ${\mathrm{dim}}_{k}N=a$. 

The following lemma is well-known. (For example, see \cite[p.268]{Ekedahl-Geer}.)
\begin{lem}\label{filtration}
For a supersingular $K3$ surface $X$ over $k$, the following are satisfied.
\begin{enumerate}
\item $p^{-1}{\mathrm{ch_{cris}}}({\pi^{-1}_{\Pic}}(N)\cap(1\otimes{\sigma})^{-1}({\pi^{-1}_{\Pic}}(N)))=\phi^{-1}pH^2_{\mathrm{cris}}(X/W)$ 
\item ${\mathrm{ch_{cris}}}((1\otimes{\sigma})^{-1}({\pi^{-1}_{\Pic}}(N)))=\phi^{-1}p^2H^2_{\mathrm{cris}}(X/W)$
\item $p^{-1}{\mathrm{ch_{cris}}}({\pi^{-1}_{\Pic}}(N)\cap(1\otimes{\sigma})({\pi^{-1}_{\Pic}}(N)))=p^{-1}(\Im\phi\cap pH^2_{\mathrm{cris}}(X/W))$
\item ${\mathrm{ch_{cris}}}((1\otimes{\sigma})({\pi^{-1}_{\Pic}}(N)))=\Im\phi$
\end{enumerate}
\end{lem}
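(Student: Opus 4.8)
The plan is to reduce all four identities to formal manipulations with a single $\sigma$-linear automorphism of the lattice $L:={\mathrm{ch_{cris}}}(\Pic(X)\otimes_{\Z}W)\subset H^2_{\mathrm{cris}}(X/W)$, built from the Frobenius-compatibility of the crystalline cycle class. First I would record the key equivariance relation
$$\phi\circ{\mathrm{ch_{cris}}}=p\cdot{\mathrm{ch_{cris}}}\circ(1\otimes\sigma),$$
which comes from the fact that for a line bundle $\mathscr{L}$ the crystalline first Chern class is a Frobenius eigenvector, $\phi({\mathrm{ch_{cris}}}(\mathscr{L}))=p\,{\mathrm{ch_{cris}}}(\mathscr{L})$ (codimension-one cycle classes lie in the $\phi=p$ eigenspace), combined with the $W$-linearity of ${\mathrm{ch_{cris}}}$ and the $\sigma$-linearity of $\phi$. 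Since ${\mathrm{ch_{cris}}}\colon\Pic(X)\otimes_{\Z}W\to L$ is bijective and $(1\otimes\sigma)$ is a $\sigma$-linear bijection, this says $\phi|_{L}=p\,\Sigma$, where $\Sigma:={\mathrm{ch_{cris}}}\circ(1\otimes\sigma)\circ{\mathrm{ch_{cris}}}^{-1}$ is a $\sigma$-linear automorphism of $L$ with $\Sigma(L)=L$. Because $X$ is supersingular, $\rank\Pic(X)=22$, so ${\mathrm{ch_{cris}}}$ becomes an isomorphism after inverting $p$; hence $\Sigma$ extends to a $\sigma$-linear automorphism of $H^2_{\mathrm{cris}}(X/W)\otimes_{W}\Frac(W)$ and $\phi=p\,\Sigma$ holds there. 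Throughout, $\phi^{-1}(p^iH^2_{\mathrm{cris}}(X/W))$ denotes the preimage taken inside $H^2_{\mathrm{cris}}(X/W)$, as in Lemma \ref{Mazur}.

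Second I would isolate the one integral input that makes the argument work: $pH^2_{\mathrm{cris}}(X/W)\subseteq L$. Since ${\mathrm{ch_{cris}}}$ preserves the pairing and $\Pic(X)$ is a $p$-elementary lattice, $pL^{\vee}\subseteq L$ for the dual lattice $L^{\vee}$ of $L$ inside $H^2_{\mathrm{cris}}(X/W)\otimes_{W}\Frac(W)$; as the crystalline pairing is perfect, $H^2_{\mathrm{cris}}(X/W)$ is self-dual, so $L\subseteq H^2_{\mathrm{cris}}(X/W)\subseteq L^{\vee}$ and therefore $pH^2_{\mathrm{cris}}(X/W)\subseteq pL^{\vee}\subseteq L$. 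Using the commutative diagram and $\Ker\pi=pH^2_{\mathrm{cris}}(X/W)$, a vector $v$ lies in $M:=\pi_{\Pic}^{-1}(N)$ if and only if ${\mathrm{ch_{dR}}}(\pi_{\Pic}(v))=\pi({\mathrm{ch_{cris}}}(v))=0$, i.e.\ ${\mathrm{ch_{cris}}}(v)\in pH^2_{\mathrm{cris}}(X/W)$; combined with $pH^2_{\mathrm{cris}}(X/W)\subseteq L$ this yields the base computation ${\mathrm{ch_{cris}}}(M)=pH^2_{\mathrm{cris}}(X/W)$. Applying $\Sigma^{\pm1}$ and using injectivity of ${\mathrm{ch_{cris}}}$ then gives ${\mathrm{ch_{cris}}}((1\otimes\sigma)^{\pm1}M)=\Sigma^{\pm1}(pH^2_{\mathrm{cris}}(X/W))$; in particular ${\mathrm{ch_{cris}}}((1\otimes\sigma)M)=\Sigma(pH^2_{\mathrm{cris}}(X/W))=p\,\Sigma(H^2_{\mathrm{cris}}(X/W))=\phi(H^2_{\mathrm{cris}}(X/W))=\Im\phi$, which is statement (4).

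Finally I would read off the remaining three identities. For (2), $\phi(x)\in p^2H^2_{\mathrm{cris}}(X/W)$ is equivalent to $\Sigma(x)\in pH^2_{\mathrm{cris}}(X/W)$, so $\phi^{-1}(p^2H^2_{\mathrm{cris}}(X/W))=\Sigma^{-1}(pH^2_{\mathrm{cris}}(X/W))={\mathrm{ch_{cris}}}((1\otimes\sigma)^{-1}M)$, and $\Sigma^{-1}(pH^2_{\mathrm{cris}}(X/W))\subseteq\Sigma^{-1}(L)=L$ confirms the image lands in $L$ as it must. For (1) and (3), injectivity of ${\mathrm{ch_{cris}}}$ carries intersections to intersections, giving ${\mathrm{ch_{cris}}}(M\cap(1\otimes\sigma)^{-1}M)=pH^2_{\mathrm{cris}}(X/W)\cap\Sigma^{-1}(pH^2_{\mathrm{cris}}(X/W))$ and ${\mathrm{ch_{cris}}}(M\cap(1\otimes\sigma)M)=pH^2_{\mathrm{cris}}(X/W)\cap\Sigma(pH^2_{\mathrm{cris}}(X/W))$; scaling by $p^{-1}$ and using $p^{-1}\Sigma^{-1}(pH^2_{\mathrm{cris}}(X/W))=\Sigma^{-1}(H^2_{\mathrm{cris}}(X/W))$ identifies the left-hand side of (1) with $\{x\in H^2_{\mathrm{cris}}(X/W):\Sigma(x)\in H^2_{\mathrm{cris}}(X/W)\}=\phi^{-1}(pH^2_{\mathrm{cris}}(X/W))$, while (3) reduces to $p^{-1}(\Im\phi\cap pH^2_{\mathrm{cris}}(X/W))$ since $\Sigma(pH^2_{\mathrm{cris}}(X/W))=\Im\phi$. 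The main obstacle, and the only place supersingularity and the integral structure really enter, is passing from the rational to the integral statement: one must know that $\phi=p\,\Sigma$ with $\Sigma$ preserving $L$ (from supersingularity via the Frobenius-equivariance) and that $pH^2_{\mathrm{cris}}(X/W)\subseteq L$ (from $p$-elementariness), for only then do the spurious factors $L\cap(-)$ collapse and do the Frobenius lattices $\phi^{-1}(p^iH^2_{\mathrm{cris}}(X/W))$ and $\Im\phi$ genuinely lie in the image of ${\mathrm{ch_{cris}}}$.
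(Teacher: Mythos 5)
Your proposal is correct and rests on exactly the same two inputs as the paper's own proof: the Frobenius equivariance $\phi\circ{\mathrm{ch_{cris}}}={\mathrm{ch_{cris}}}\circ(p\otimes\sigma)$ and the equality ${\mathrm{ch_{cris}}}({\pi^{-1}_{\Pic}}(N))=pH^2_{\mathrm{cris}}(X/W)$, the latter deduced, just as in the paper, from $p$-elementariness of $\Pic(X)$ together with self-duality of $H^2_{\mathrm{cris}}(X/W)$. The only difference is presentational: you package the computation into the $\sigma$-linear automorphism $\Sigma$ and argue with subsets (which also lets you treat all four identities uniformly), whereas the paper chases elements and proves $(1)$, leaving $(2)$--$(4)$ as analogous.
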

\begin{proof}
We only sketch the proof. 
The main points are the following equalities:
\begin{itemize}
\item ${\mathrm{ch_{cris}}}({\pi^{-1}_{\Pic}}(N))=pH^2_{\mathrm{cris}}(X/W),$
\item $\phi\circ{\mathrm{ch_{cris}}}={\mathrm{ch_{cris}}}\circ(p\otimes\sigma).$
\end{itemize}
The first equality follows from $p(\Pic(X){\otimes}_{\Z}{W})^{\vee}\subset (\Pic(X){\otimes}_{\Z}{W})$ and $H^2_{\mathrm{cris}}(X/W) \subset (\Pic(X){\otimes}_{\Z}{W})^{\vee}$. 
We shall only prove $(1)$ since other equalities can be proved in the same way. For $x \in \mathrm{LHS}$, there exists $y \in (1\otimes{\sigma})^{-1}({\pi^{-1}_{\Pic}}(N))$ such that $px={\mathrm{ch_{cris}}}(y)$. Take $z \in H^2_{\mathrm{cris}}(X/W)$ such that ${\mathrm{ch_{cris}}}((1\otimes\sigma)(y))=pz$. We have 
$$p\phi(x)=\phi(px)=\phi({\mathrm{ch_{cris}}}(y))=p{\mathrm{ch_{cris}}}((1\otimes\sigma)(y))=p^2z.$$
Hence we have $\phi(x)=pz$ and $x \in \mathrm{RHS}$. 
Conversely, for $x \in \mathrm{RHS}$, there exists $y \in H^2_{\mathrm{cris}}(X/W)$ such that $\phi(x)=py$. Take $z \in {\pi^{-1}_{\Pic}}(K)$ such that $px={\mathrm{ch_{cris}}}(z)$. We have
 $$p{\mathrm{ch_{cris}}}((1\otimes\sigma)(z))= \phi({\mathrm{ch_{cris}}}(z))  =p\phi(x)=p^2y.$$
Hence we have ${\mathrm{ch_{cris}}}((1\otimes\sigma)(z))=py$ and $z \in (1\otimes{\sigma})^{-1}({\pi^{-1}_{\Pic}}(K)).$ So $x \in \mathrm{LHS}$.
\end{proof}
We recall the definition of \textit{superspecial} $K3$ surfaces from \cite[Section 2]{Ogus}.

\begin{defn}
A $K3$ surface $X$ over $k$ is \em{superspecial} if $$F^2H^2_{\mathrm{dR}}(X)=F^2_{\mathrm{conj}}H^2_{\mathrm{dR}}(X).$$ 
\end{defn}
Since $F^1H^2_{\mathrm{dR}}(X)=(F^2H^2_{\mathrm{dR}}(X))^{\perp}$ and $F^1_{\mathrm{conj}}H^2_{\mathrm{dR}}(X)=(F^2_{\mathrm{conj}}H^2_{\mathrm{dR}}(X))^{\perp}$ with respect to the canonical pairing
  $$H^2_{\mathrm{dR}}(X)\times H^2_{\mathrm{dR}}(X)\rightarrow k,$$ 
  a $K3$ surface $X$ is superspecial if and only if $F^{1}H^2_{\mathrm{dR}}(X)=F^{1}_{\mathrm{conj}}H^2_{\mathrm{dR}}(X)$.\par
The following proposition is presumably well-known. We include the proof for the readers' convenience.
\begin{prop}\label{superspecial}
A $K3$ surface $X$ over $k$ is superspecial if and only if $X$ is supersingular with Artin invariant $1$. 
\end{prop}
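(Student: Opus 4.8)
The plan is to prove both implications by first translating the defining equality into a condition on the subspace $N=\Ker(\mathrm{ch_{dR}})\subset\Pic(X)\otimes_{\Z}k$, under the assumption that $X$ is supersingular. Granting that $X$ is supersingular, I combine Lemma~\ref{Mazur}(2),(4) with Lemma~\ref{filtration}(2),(4), using the commutativity $\pi\circ\mathrm{ch_{cris}}=\mathrm{ch_{dR}}\circ\pi_{\Pic}$ and the fact that $\pi_{\Pic}$ intertwines $1\otimes\sigma$ on $\Pic(X)\otimes_{\Z}W$ with $1\otimes\sigma$ on $\Pic(X)\otimes_{\Z}k$, to obtain
$$F^2H^2_{\mathrm{dR}}(X)=\mathrm{ch_{dR}}((1\otimes\sigma)^{-1}N),\qquad F^2_{\mathrm{conj}}H^2_{\mathrm{dR}}(X)=\mathrm{ch_{dR}}((1\otimes\sigma)N),$$
where now $1\otimes\sigma$ denotes the Frobenius on $\Pic(X)\otimes_{\Z}k$. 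Since $\Ker(\mathrm{ch_{dR}})=N$, adding $N$ to each side shows that $X$ is superspecial if and only if $(1\otimes\sigma)^{-1}N+N=(1\otimes\sigma)N+N$ inside $\Pic(X)\otimes_{\Z}k$.

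Next I carry out the equivalence with the Artin invariant, still assuming $X$ supersingular. For the forward direction, set $U:=N+(1\otimes\sigma)N$; applying $1\otimes\sigma$ to the criterion gives $U=(1\otimes\sigma)N+(1\otimes\sigma)^2N=(1\otimes\sigma)U$, so $U$ is stable under $1\otimes\sigma$ and therefore contains $(1\otimes\sigma)^iN$ for all $i\in\Z$. The identity $\dim_k F^2_{\mathrm{conj}}H^2_{\mathrm{dR}}(X)=1$ forces $\dim_k U=a+1$. On the other hand every $x\in N$ pairs trivially with $\Pic(X)\otimes_{\Z}k$ (because $\mathrm{ch_{dR}}$ is compatible with the pairings and $\mathrm{ch_{dR}}(x)=0$), so $N$, and hence each $(1\otimes\sigma)^iN$, lies in the radical $R$ of the reduction modulo $p$ of the intersection pairing, a space of dimension $2a$ that is stable under $1\otimes\sigma$. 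For a supersingular $K3$ crystal the characteristic subspace $N$ is strictly characteristic, i.e.\ $\sum_{i\in\Z}(1\otimes\sigma)^iN=R$ \cite{Ogus}; since this sum lies in $U$ we get $2a=\dim_k R\le\dim_k U=a+1$, whence $a=1$. Conversely, if $a=1$ then $N$, $(1\otimes\sigma)N$ and $(1\otimes\sigma)^{-1}N$ are lines in the $2$-dimensional space $R$, and $\dim_k F^2H^2_{\mathrm{dR}}(X)=\dim_k F^2_{\mathrm{conj}}H^2_{\mathrm{dR}}(X)=1$ forces $N\cap(1\otimes\sigma)^{\pm1}N=0$, so both $(1\otimes\sigma)^{-1}N+N$ and $(1\otimes\sigma)N+N$ are equal to $R$; by the criterion $X$ is superspecial.

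The remaining point, which I expect to be the main obstacle, is to show that a superspecial $K3$ surface is supersingular, for this is what licenses the use of Lemma~\ref{filtration}. I would argue the contrapositive: if $X$ has finite height $h$ then $F^2H^2_{\mathrm{dR}}(X)\neq F^2_{\mathrm{conj}}H^2_{\mathrm{dR}}(X)$. By Lemma~\ref{Mazur} one has $F^2_{\mathrm{conj}}H^2_{\mathrm{dR}}(X)=\pi(\Im\phi)$ and $F^2H^2_{\mathrm{dR}}(X)=\pi(\phi^{-1}p^2H^2_{\mathrm{cris}}(X/W))$, and over the algebraically closed field $k$ the Dieudonn\'e--Manin decomposition of $(H^2_{\mathrm{cris}}(X/W)\otimes\Q,\phi)$ has slopes $1-1/h<1$ and $1+1/h>1$ alongside the slope-$1$ part. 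The expected mechanism is that $\Im\phi$ reduces to (a line in) the smallest-slope piece while $\phi^{-1}p^2H^2_{\mathrm{cris}}(X/W)$ reduces to the largest-slope piece --- exactly as in the ordinary case $h=1$, where $F^2_{\mathrm{conj}}$ is the unit-root line and $F^2$ the slope-$2$ line --- so the two lines cannot coincide unless all slopes equal $1$, i.e.\ unless $X$ is supersingular. The delicate part is that the Dieudonn\'e--Manin decomposition is integral only up to isogeny, so one must track the reductions modulo $p$ of the slope pieces carefully; alternatively one may invoke Ogus's structure theory of supersingular $K3$ crystals \cite{Ogus}, which yields both this implication and the strict characteristicity used above. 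Once supersingularity is in hand, the previous two paragraphs finish the proof.
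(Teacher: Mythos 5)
Your reduction of the superspecial condition (for supersingular $X$) to the criterion $(1\otimes\sigma)^{-1}N+N=(1\otimes\sigma)N+N$ is correct, and so is the equivalence of that criterion with $a=1$: these two parts form a genuine variant of the paper's argument. The paper works with $F^1$ and intersections, i.e.\ parts (1),(3) of Lemmas \ref{Mazur} and \ref{filtration}, arriving at $N\cap(1\otimes\sigma)^{-1}(N)=N\cap(1\otimes\sigma)(N)$ and then quoting \cite[Corollary 11.4]{van-katsura}; you work with $F^2$ and sums, i.e.\ parts (2),(4), and quote strict characteristicity $\sum_{i}(1\otimes\sigma)^iN=R$ from Ogus \cite{Ogus}. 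These external inputs are of comparable depth (both are the Artin--Rudakov--Shafarevich--Ogus structure theory of supersingular $K3$ surfaces, and both implicitly use that $\Pic(X)\otimes_{\Z}\Z_p$ is the full Tate module of the crystal, which in characteristic $2$ rests on the Tate conjecture of \cite{Kim-Madapusi}, exactly as in the paper). Your converse direction ($a=1$ implies superspecial) is in fact more self-contained than the paper's, which appeals to \cite[Proposition 7.1]{van-katsura}.

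The genuine gap is the step you flag yourself: superspecial implies supersingular. As written, this is not a proof. The ``expected mechanism'' of tracking reductions of Dieudonn\'e--Manin slope pieces has no integral meaning: for $1<h<\infty$ the slope decomposition of $H^2_{\mathrm{cris}}(X/W)\otimes\Q$ exists only after inverting $p$, the lattice need not split into lattices in the isoclinic pieces, so ``$\Im\phi$ reduces to a line in the smallest-slope piece'' is not a statement one can even formulate in general. Moreover, your fallback---invoking Ogus's structure theory of supersingular $K3$ crystals---cannot close this gap, since that theory presupposes supersingularity, whereas the implication to be proved is precisely about finite-height (non-supersingular) surfaces; this is why the paper instead cites the height-stratification results \cite[Proposition 7.1 and Lemma 9.6]{van-katsura}. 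If you want to keep a slope argument, it can be made rigorous without any decomposition: by Lemma \ref{Mazur}(2),(4), superspecial means $\phi(H)+pH=\phi^{-1}(p^2H)+pH$ for $H=H^2_{\mathrm{cris}}(X/W)$, whence $\phi^2(H)\subset p\phi(H)+p^2H$, and by induction
\[
\phi^n(H)\subset p^{n-1}\phi(H)+p^nH\subset p^{n-1}H \quad\text{for all } n\geq 1.
\]
Therefore every Newton slope of $(H,\phi)$ is at least $1$; since the $22$ slopes sum to $\ord_p(\det\phi)=22$ (take determinants in $\langle\phi x,\phi y\rangle=p^2\sigma\langle x,y\rangle$, using perfectness of the pairing), all slopes equal $1$. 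Thus the height of the formal Brauer group is infinite, and $X$ is supersingular by the Tate conjecture \cite{Kim-Madapusi}, as recalled in Section \ref{sectionsuperspecial}. With this repair (or with the paper's citation of \cite{van-katsura}), your proof is complete.
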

\begin{proof}
First, we assume that $X$ is superspecial. We have $F^1H^2_{\mathrm{dR}}(X)=F^1_{\mathrm{conj}}H^2_{\mathrm{dR}}(X)$ and their dimensions as $k$-vector spaces are equal to $21$. We see that $X$ is supersingular by \cite[Proposition 7.1]{van-katsura} and \cite[Lemma 9.6]{van-katsura}. We have 
\begin{align*}
& \hspace*{-1cm}\pi(p^{-1}{\mathrm{ch_{cris}}}({\pi^{-1}_{\Pic}}(N)\cap(1\otimes{\sigma})^{-1}({\pi^{-1}_{\Pic}}(N))))\\
&= \pi(p^{-1}{\mathrm{ch_{cris}}}({\pi^{-1}_{\Pic}}(N)\cap(1\otimes{\sigma})({\pi^{-1}_{\Pic}}(N))))
\end{align*}
 by $(1), (3)$ of Lemma \ref{Mazur} and $(1), (3)$ of Lemma \ref{filtration}. 
Hence we have 
\begin{align*}
& \hspace*{-1cm} {\mathrm{ch_{cris}}}({\pi^{-1}_{\Pic}}(N)\cap(1\otimes{\sigma})^{-1}({\pi^{-1}_{\Pic}}(N)))+p^2H^2_{\mathrm{cris}}(X/W) \\
&= {\mathrm{ch_{cris}}}({\pi^{-1}_{\Pic}}(N)\cap(1\otimes{\sigma})({\pi^{-1}_{\Pic}}(N)))+p^2H^2_{\mathrm{cris}}(X/W).
\end{align*}
We have
\begin{align}\label{equation}
&\hspace{-1cm}{\pi^{-1}_{\Pic}}(N)\cap(1\otimes{\sigma})^{-1}({\pi^{-1}_{\Pic}}(K))+p{\pi^{-1}_{\Pic}}(N)\\
&= {\pi^{-1}_{\Pic}}(N)\cap(1\otimes{\sigma})({\pi^{-1}_{\Pic}}(N))+p{\pi^{-1}_{\Pic}}(N)\nonumber
\end{align}
by ${\mathrm{ch_{cris}}}({\pi^{-1}_{\Pic}}(N))=pH^2_{\mathrm{cris}}(X/W)$ and the injectivity of ${\mathrm{ch_{cris}}}$. 
Taking the images of the both sides of (\ref{equation}) by $\pi_{\Pic}$, we have
$$N\cap(1\otimes{\sigma})^{-1}(N)=N\cap(1\otimes{\sigma})(N).$$
Hence we have
\begin{equation}\label{2.6}
N\cap(1\otimes{\sigma})(N)=(1\otimes{\sigma})(N\cap(1\otimes{\sigma})(N)).
\end{equation}
Then the Artin invariant of $X$ is equal to $1$ by \cite[Corollary 11.4]{van-katsura}. (In the notation of \cite{van-katsura}, we have $\sigma_0 = a$,  $U_{1}=N$, $U_{2}=U_{1}\cap(1\otimes{\sigma})(U_{1})$, and $U_{3}=U_{2}\cap (1\otimes{\sigma})(U_{2})$. By (\ref{2.6}), we have $U_{2}=U_{3}=0$. From this, we see that  $\sigma_0 =1$. Hence the Artin invariant of $X$ is $1$.)\par
Conversely, we assume that $X$ is supersingular with Artin invariant $1$. Since ${\mathrm{dim}}_{k}N=1$, the dimension of the  image of ${\mathrm{ch_{dR}}}$ is equal to $21$. Since the image of ${\mathrm{ch_{dR}}}$ is contained in $F^1H^2_{\mathrm{dR}}(X)\cap F^1_{\mathrm{conj}}H^2_{\mathrm{dR}}(X)$, we have $F^1H^2_{\mathrm{dR}}(X)=F^1_{\mathrm{conj}}H^2_{\mathrm{dR}}(X)$ by \cite[Proposition 7.1]{van-katsura}.
\end{proof}

Thanks to the Tate conjecture for $K3$ surfaces, we have the following lemma.
\begin{lem}[{\cite[Proposition 4.2]{Esnault-Oguiso}}]\label{ample}
Let $X$ be a $K3$ surface over $k$. Then there exists a primitive ample line bundle $\mathscr{L}$ such that ${\mathrm{ch_{dR}}}(\mathscr{L})$ is not contained in $F^2H^2_{\mathrm{dR}}(X)$.
\end{lem}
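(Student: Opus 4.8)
The plan is to decide membership of ${\mathrm{ch_{dR}}}(\mathscr{L})$ in the line $F^2H^2_{\mathrm{dR}}(X)$ crystallinely, via ${\mathrm{ch_{cris}}}$ and Mazur's description of the filtrations (Lemma~\ref{Mazur}). Two facts organize the argument. Both $F^2H^2_{\mathrm{dR}}(X)$ and $F^2_{\mathrm{conj}}H^2_{\mathrm{dR}}(X)$ are one-dimensional over $k$ --- they are the bottom graded pieces $H^0(X,\Omega^2_{X/k})$ and $H^2(X,\mathscr{H}^0(\Omega^{\bullet}_{X/k}))$, each of dimension $1$ for a $K3$ surface --- and, by definition, $X$ is superspecial exactly when these two lines coincide. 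Moreover, for $\mathscr{L}\in\Pic(X)$ the class $v:={\mathrm{ch_{cris}}}(\mathscr{L}\otimes 1)$ is a Frobenius eigenvector, $\phi(v)=pv$, by the identity $\phi\circ{\mathrm{ch_{cris}}}={\mathrm{ch_{cris}}}\circ(p\otimes\sigma)$ used in the proof of Lemma~\ref{filtration}, while ${\mathrm{ch_{dR}}}(\mathscr{L})=\pi(v)$ automatically lies in $F^1H^2_{\mathrm{dR}}(X)$.

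The crux is the implication that if ${\mathrm{ch_{dR}}}(\mathscr{L})\in F^2H^2_{\mathrm{dR}}(X)$, then already ${\mathrm{ch_{dR}}}(\mathscr{L})\in F^2_{\mathrm{conj}}H^2_{\mathrm{dR}}(X)$. By Lemma~\ref{Mazur}$(2)$ and $\Ker\pi=pH^2_{\mathrm{cris}}(X/W)$, the hypothesis gives $v=u+pw$ with $\phi(u)\in p^2H^2_{\mathrm{cris}}(X/W)$. Applying $\phi$ and using $\phi(v)=pv$ yields $\phi(u)=p(v-\phi(w))$, so $v-\phi(w)\in pH^2_{\mathrm{cris}}(X/W)$ and therefore ${\mathrm{ch_{dR}}}(\mathscr{L})=\pi(v)=\pi(\phi(w))\in\pi(\Im\phi)=F^2_{\mathrm{conj}}H^2_{\mathrm{dR}}(X)$ by Lemma~\ref{Mazur}$(4)$. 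Hence any line bundle whose de Rham class meets $F^2$ has that class in $F^2H^2_{\mathrm{dR}}(X)\cap F^2_{\mathrm{conj}}H^2_{\mathrm{dR}}(X)$.

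If $X$ is not superspecial the two lines are distinct and meet only in $0$, so the implication becomes ${\mathrm{ch_{dR}}}(\mathscr{L})\in F^2H^2_{\mathrm{dR}}(X)\Rightarrow{\mathrm{ch_{dR}}}(\mathscr{L})=0$. It then suffices to find a primitive ample $\mathscr{L}$ with ${\mathrm{ch_{dR}}}(\mathscr{L})\neq 0$, i.e.\ with $\mathscr{L}\otimes 1\notin N$. The subspace $N$ is proper: for supersingular $X$ of Artin invariant $a$ we have $\dim_kN=a\le 10<22=\rank\Pic(X)$, and for $X$ of finite height the reduced crystalline cycle class map is injective, so $N=0$. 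Since ample classes span $\Pic(X)\otimes_{\Z}\Q$, not all of them lie in the proper subspace $N$; picking an ample $\mathscr{A}$ with $\mathscr{A}\otimes 1\notin N$ and writing $\mathscr{A}=m\mathscr{A}'$ with $\mathscr{A}'$ primitive ample, the subspace property forces $\mathscr{A}'\otimes 1\notin N$ as well.

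The main obstacle is the superspecial case, where $F^2H^2_{\mathrm{dR}}(X)=F^2_{\mathrm{conj}}H^2_{\mathrm{dR}}(X)$ and the implication above carries no information. Here I would invoke Proposition~\ref{superspecial}: $X$ is supersingular with Artin invariant $1$, hence $\dim_kN=1$ and ${\mathrm{ch_{dR}}}$ maps onto the $21$-dimensional space $F^1H^2_{\mathrm{dR}}(X)$, which contains $F^2H^2_{\mathrm{dR}}(X)$. Consequently $V:={\mathrm{ch_{dR}}}^{-1}(F^2H^2_{\mathrm{dR}}(X))$ has dimension $\dim_kN+\dim_kF^2H^2_{\mathrm{dR}}(X)=2$, a proper subspace of the $22$-dimensional space $\Pic(X)\otimes_{\Z}k$. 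Membership $\mathscr{L}\otimes 1\in V$ depends only on $\mathscr{L}\bmod p$ and defines a proper $\F_p$-subspace $V_0\subset\Pic(X)/p\Pic(X)$. Because the ample cone is open and full-dimensional, adding to a deep ample bundle a bounded correction realizes every residue in $\Pic(X)/p\Pic(X)$ by an ample bundle; choosing a residue outside $V_0$ (in particular nonzero, so not divisible by $p$) and passing to a primitive representative $\mathscr{L}$, whose reduction is a unit multiple of the chosen residue and hence still outside $V_0$, yields a primitive ample $\mathscr{L}$ with $\mathscr{L}\otimes 1\notin V$, that is, ${\mathrm{ch_{dR}}}(\mathscr{L})\notin F^2H^2_{\mathrm{dR}}(X)$. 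I expect this superspecial case --- matching the paper's theme of requiring extra care for superspecial $K3$ surfaces --- to be the only genuine difficulty.
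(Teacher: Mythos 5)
Your proposal is sound in substance, but it takes a genuinely different route from the paper. The paper reduces the statement to showing that $c_{1}\colon \Pic(X)\otimes_{\Z}\F_{p}\rightarrow H^1(X,\Omega^1_{X/k})$ is not identically zero, deferring to the proof of Esnault--Oguiso for the passage from this non-vanishing to a \emph{primitive ample} bundle; the non-vanishing is then checked by quoting van der Geer--Katsura in the finite height case and, for supersingular surfaces (rank $22$ by the Tate conjecture), by the single dimension count $\dim_{k}\Im(\mathrm{ch_{dR}})=22-a\geq 12>1=\dim_{k}F^2H^2_{\mathrm{dR}}(X)$, which handles superspecial surfaces with no extra effort. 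You instead prove a crystalline eigenvector lemma --- since $\phi(\mathrm{ch_{cris}}(\mathscr{L}\otimes 1))=p\,\mathrm{ch_{cris}}(\mathscr{L}\otimes 1)$, Mazur's theorem forces $\mathrm{ch_{dR}}(\mathscr{L})\in F^2H^2_{\mathrm{dR}}(X)\Rightarrow \mathrm{ch_{dR}}(\mathscr{L})\in F^2_{\mathrm{conj}}H^2_{\mathrm{dR}}(X)$ --- so that off the superspecial locus the condition ``lies in $F^2$'' collapses to ``equals $0$''; the superspecial case is then settled by the Artin-invariant-$1$ dimension count via Proposition \ref{superspecial}, and you make the ample-cone and primitivity arguments explicit rather than citing Esnault--Oguiso. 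Your crystalline computation is correct, and the superspecial argument (deep ample bundle plus bounded correction realizing every residue class mod $p$, then passing to a primitive representative, which changes the residue only by a unit) is complete. Note that your case split ``finite height or supersingular'' silently uses the same Tate-conjecture input (height $\infty$ implies rank $22$) that the paper invokes explicitly, and your finite-height claim that $N=0$ is exactly the van der Geer--Katsura result the paper cites; so the external inputs are the same, only organized differently.

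One step needs repair. In the non-superspecial case you justify the existence of an ample $\mathscr{A}$ with $\mathscr{A}\otimes 1\notin N$ by saying that ample classes span $\Pic(X)\otimes_{\Z}\Q$. That is not sufficient: $N$ is a subspace of $\Pic(X)\otimes_{\Z}k$, and classes that are linearly independent over $\Q$ can become dependent after reduction modulo $p$ (already in $\Z^2$, the classes $(1,0)$ and $(1,p)$ span over $\Q$ but have the same image mod $p$), so a $\Q$-spanning set of ample classes could a priori all map into a proper subspace of $\Pic(X)\otimes_{\Z}k$. The conclusion is nevertheless true: either use that ample classes generate $\Pic(X)$ as a group (every line bundle is a difference of two ample ones), so if every ample class mapped into $N$ then the whole image of $\Pic(X)$ would lie in $N$, contradicting that this image spans $\Pic(X)\otimes_{\Z}k$ over $k$; or simply run the deep-ample-plus-residue argument you already use in the superspecial paragraph, which applies verbatim with $V$ replaced by $N$.
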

\begin{proof}
This result is proved by Esnault and Oguiso when $p\geq3$; see \cite[Proposition 4.2]{Esnault-Oguiso}. Thanks to the Tate conjecture for $K3$ surfaces in characteristic $2$ proved in \cite{Kim-Madapusi}, we can also prove it when $p=2$ as follows.
We have only to show that the canonical map
$$c_{1} \colon \Pic(X)\otimes_{\Z}{\F_{p}} \rightarrow H^1(X, \Omega^1_{X/k})$$
is not identically zero. (See the proof of \cite[Proposition 4.2]{Esnault-Oguiso}.) If the height of $X$ is not equal to $\infty$, this follows from \cite[Proposition 10.3]{van-katsura}. If the height of $X$ is $\infty$, then the rank of $\Pic(X)$ is equal to $22$ by the Tate conjecture \cite{Charles13}, \cite{Kim-Madapusi}, \cite{Madapusi}, \cite{Maulik}; see also \cite[Corollaire 0.5]{Benoist}, \cite[Corollary 17.3.7]{Huybrechts}. Since ${\mathrm{dim}}_{k}\Ker({\mathrm{ch_{dR}}})$ is equal to the Artin invariant $a$ with $1 \leq a \leq 10$, we have 
$${\mathrm{dim}}_{k}\Im({\mathrm{ch_{dR}}})=22-a  \geq12 >{\mathrm{dim}}_{k}F^2H^2_{\mathrm{dR}}(X)=1.$$
Hence $\Im({\mathrm{ch_{dR}}})$ is not contained in $F^2H^2_{\mathrm{dR}}(X)$, and the map $c_{1}$ is not identically zero. \end{proof}
The following result was proved by Deligne and Ogus when $p\geq3$ or $X$ is not superspecial. (For $K3$ surfaces in odd characteristics, see also Liedtke's lecture notes \cite[Theorem 2.9]{Liedtke}.) Using Lemma \ref{ample}, we can prove it also for superspecial $K3$ surfaces in characteristic $2$.

\begin{prop}[{\cite[Corollary 2.3]{Ogus}}]\label{lifting}
Let $X$ be a $K3$ surface over $k$. There is a projective smooth scheme $\mathscr{X}$ over $W$ such that $\mathscr{X}{\otimes_{{{W}}}}{k}\simeq X$.
\end{prop}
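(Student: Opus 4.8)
The plan is to follow Ogus's deformation-theoretic argument, lifting not $X$ alone but a pair $(X, \mathscr{L})$ for a carefully chosen ample line bundle, and then algebraizing. First I would invoke Lemma \ref{ample} to fix a primitive ample line bundle $\mathscr{L}$ with ${\mathrm{ch_{dR}}}(\mathscr{L}) \notin F^2H^2_{\mathrm{dR}}(X)$; equivalently, the image $c_1(\mathscr{L})$ of ${\mathrm{ch_{dR}}}(\mathscr{L})$ in $F^1H^2_{\mathrm{dR}}(X)/F^2H^2_{\mathrm{dR}}(X)\simeq H^1(X,\Omega^1_{X/k})$ is nonzero. It is exactly the existence of such an $\mathscr{L}$ that was missing for superspecial $K3$ surfaces in characteristic $2$, and Lemma \ref{ample} supplies it.

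Next I would record that the deformations of $X$ over $W$ are unobstructed. Since $\omega_X \simeq \O_X$, we have $T_X \simeq \Omega^1_{X/k}$, so Serre duality gives $H^2(X, T_X) \simeq H^0(X, \Omega^1_{X/k})^{\vee} = 0$, while $H^1(X, T_X)\simeq H^1(X,\Omega^1_{X/k})$ is $20$-dimensional. Hence the local deformation functor $\mathrm{Def}(X/W)$, defined on Artinian local $W$-algebras with residue field $k$, is formally smooth over $W$.

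The key step is the deformation theory of the pair. Along a small extension $A'\rightarrow A$ with kernel $I$ a $k$-vector space, a deformation $X_{A'}$ of $X_A$ always exists by unobstructedness, and the set of such lifts is a torsor under $H^1(X,T_X)\otimes_k I$. For a fixed lift $X_{A'}$ with Kodaira-Spencer class $\kappa$, the obstruction to extending $\mathscr{L}$ lies in $H^2(X,\O_X)\otimes_k I$ and equals the contraction $\kappa\cdot c_1(\mathscr{L})$. As $X_{A'}$ ranges over the torsor, this obstruction varies by the linear map $H^1(X,T_X)\rightarrow H^2(X,\O_X)$, $\kappa\mapsto\kappa\cdot c_1(\mathscr{L})$; since $c_1(\mathscr{L})\neq0$ and ${\mathrm{dim}}_k H^2(X,\O_X)=1$, this map is surjective. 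Therefore one can always choose the lift $X_{A'}$ so that $\mathscr{L}$ extends too, and its extension is unique because $H^1(X,\O_X)=0$. Consequently the deformation functor of $(X,\mathscr{L})$ is formally smooth over $W$. This surjectivity is precisely where the choice of $\mathscr{L}$ from Lemma \ref{ample} is essential, and it is the main obstacle; without it, superspecial surfaces in characteristic $2$ cannot be handled, which is why they required separate treatment.

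Finally I would conclude by algebraization. Formal smoothness over $W$ together with the $k$-point $[(X,\mathscr{L})]$ produces, by successively lifting over $W/p^{n+1}W$, a formal scheme $\widehat{\mathscr{X}}\rightarrow \mathrm{Spf}(W)$ equipped with a line bundle $\widehat{\mathscr{L}}$ restricting to $\mathscr{L}$. Since $\mathscr{L}$ is ample, $\widehat{\mathscr{L}}$ is relatively ample, so Grothendieck's existence theorem algebraizes $(\widehat{\mathscr{X}},\widehat{\mathscr{L}})$ to a projective scheme $\mathscr{X}$ over $W$ with $\mathscr{X}\otimes_W k\simeq X$. Smoothness of $\mathscr{X}$ over $W$ follows because each deformation is smooth over its Artinian base, so the non-smooth locus of $\mathscr{X}\rightarrow\Spec W$ is closed with closed image in $\Spec W$ missing the closed point, hence empty.
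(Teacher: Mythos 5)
Correct, and essentially the paper's own argument: fix $\mathscr{L}$ via Lemma \ref{ample}, show the deformation functor of the pair $(X,\mathscr{L})$ is formally smooth over $W$, and algebraize with Grothendieck's existence theorem. The only difference is that where the paper simply cites \cite[Proposition 2.2.1]{Ogus} for the formal smoothness, you reprove it by Ogus's obstruction-theoretic argument (obstruction to lifting $\mathscr{L}$ varies over the torsor of lifts of $X$ by $\kappa\mapsto\kappa\cdot c_1(\mathscr{L})$, which is surjective onto the one-dimensional $H^2(X,\O_X)$ by Serre duality since $c_1(\mathscr{L})\neq 0$); that derivation is sound, though note that the vanishing $H^2(X,T_X)\simeq H^0(X,\Omega^1_{X/k})^{\vee}=0$ you invoke is itself a nontrivial theorem in characteristic $p$ (Rudakov--Shafarevich: no global $1$-forms/vector fields on $K3$ surfaces), not a formal consequence of $H^1(X,\O_X)=0$ via Hodge symmetry.
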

\begin{proof}
By Lemma \ref{ample}, the $K3$ surface $X$ has a primitive ample line bundle $\mathscr{L}$ such that ${\mathrm{ch_{dR}}}(\mathscr{L})$ is not contained in $F^2H^2_{\mathrm{dR}}(X)$. By \cite[Proposition 2.2.1]{Ogus}, the versal deformation space of $(X, \mathscr{L})$ is formally smooth over $W$. Hence by Grothendieck's algebraization theorem, there is a projective smooth scheme $\mathscr{X}$ over $W$ such that $\mathscr{X}{\otimes_{{{W}}}}{k}\simeq X$.
\end{proof}
\begin{rem}\label{superspecial defined over finite fields}
\rm{A superspecial $K3$ surface $X$ over $k$ is defined over $\overline{\F}_p$ and unique up to isomorphisms. (For $p>2$, see \cite[Corollary 7.14]{Ogus}. For $p=2$, see \cite[Section 11]{Rudakov-Shafarevich} and \cite[Theorem 1.1]{Dolgachev-Kondo}.)} Dolgachev and Kond$\mathrm{\overline{o}}$ calculated explicit defining equations of superspecial $K3$ surfaces in characteristic $2$; see \cite[Theorem 1.1]{Dolgachev-Kondo}. It is possible to prove Proposition \ref{lifting} directly for superspecial $K3$ surfaces in characteristic $2$ using the results of Dolgachev and Kond$\mathrm{\overline{o}}$. Sch${\mathrm{\overset{..}{u}}}$tt proved that a superspecial $K3$ surface $X$ has a model over $\F_p$ whose Picard number is equal to $21$; see \cite[Theorem 1]{Schutt}.
\end{rem}

\section{Proof of Theorem \ref{main1}}\label{proof of main1}
In this section, we shall prove Theorem \ref{main1}. First we make some preparation.
\begin{lem}\label{Picard group of K3}
Let $X$ be a proper scheme over a field $k$. Let $k^{\mathrm{al}}$ be an algebraic closure of $k$ and $k^{\mathrm{sep}}\subset k^{\mathrm{al}}$ the separable closure of $k$ inside $k^{\mathrm{al}}$. Assume that $X$ is geometrically connected over $k$ (i.e.\ $X_{k^{\mathrm{al}}}$ is connected) and $H^1(X, \O_{X})=0$. Then the natural map $$\Pic(X_{k^{\mathrm{sep}}}) \rightarrow \Pic(X_{k^{\mathrm{al}}})$$ is an isomorphism.

\end{lem}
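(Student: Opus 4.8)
The plan is to pass to the relative Picard scheme and exploit the fact that the hypothesis $H^1(X,\O_X)=0$ forces it to be \'etale. Set $K:=k^{\mathrm{sep}}$ and consider the Picard functor $Q:=\mathrm{Pic}_{X_K/K}$, which is representable by a group scheme locally of finite type over $K$ by the theorem of Murre and Oort for proper schemes over a field (see Grothendieck's \textit{FGA} and Kleiman's survey). Its formation commutes with base change, so $Q_{k^{\mathrm{al}}}$ represents $\mathrm{Pic}_{X_{k^{\mathrm{al}}}/k^{\mathrm{al}}}$. For a separably closed field $\Omega \supseteq K$ one has $\Br(\Omega)=0$, and since $X$ is geometrically connected and cohomologically flat in degree $0$ (e.g.\ geometrically reduced, as for the $K3$ surfaces of interest), so that $H^0(X_\Omega,\O_{X_\Omega})=\Omega$, the low-degree terms of the Leray sequence for $\mathbb{G}_m$ along $X_\Omega \to \Spec\Omega$ collapse to a natural isomorphism $\Pic(X_\Omega)\xrightarrow{\sim} Q(\Omega)$. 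Taking $\Omega=K$ and $\Omega=k^{\mathrm{al}}$, the map in the lemma is identified with $Q(K)\to Q(k^{\mathrm{al}})$, so it suffices to prove this latter map is bijective.

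Next I would show $Q$ is \'etale over $K$. The tangent space at the identity is $\Lie Q \cong H^1(X_K,\O_{X_K})=H^1(X,\O_X)\otimes_k K=0$ by flat base change, so the local ring of $Q$ at the identity has vanishing cotangent space and is therefore a field; since a group scheme locally of finite type over a field with trivial Lie algebra is smooth of dimension zero, $Q$ is \'etale over $K$. Because $K=k^{\mathrm{sep}}$ is separably closed, every \'etale $K$-scheme is a disjoint union of copies of $\Spec K$, so $Q\simeq \coprod_{g\in G}\Spec K$ with $G=Q(K)$. Consequently $Q(\Omega)=\coprod_g (\Spec K)(\Omega)=G$ for \emph{any} field extension $\Omega/K$, and in particular the natural map $Q(K)\to Q(k^{\mathrm{al}})$ is a bijection. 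Combining this with the identifications of the previous paragraph yields the desired isomorphism $\Pic(X_{k^{\mathrm{sep}}})\xrightarrow{\sim}\Pic(X_{k^{\mathrm{al}}})$.

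The conceptual point, and the reason $H^1(X,\O_X)=0$ is indispensable, is that a purely inseparable extension such as $k^{\mathrm{al}}/k^{\mathrm{sep}}$ does add new rational points to positive-dimensional smooth schemes (for instance $\mathbb{A}^1$ acquires $p$-th roots), so the identity component of $\mathrm{Pic}$ would contribute extra classes were it nontrivial; the vanishing of $H^1$ removes it, leaving an \'etale, hence constant over $k^{\mathrm{sep}}$, group scheme whose point set is insensitive to inseparable extension. I expect the only genuinely delicate step to be the comparison $\Pic(X_\Omega)\cong Q(\Omega)$ in the first paragraph: one must guarantee $f_*\mathbb{G}_m=\mathbb{G}_m$ over separably closed fields, i.e.\ cohomological flatness in degree $0$, so that the outer terms $\Pic(\Omega)$ and $\Br(\Omega)$ of the low-degree exact sequence vanish. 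This holds automatically for geometrically integral $X$, in particular for $K3$ surfaces; for a general proper geometrically connected $X$ one would verify the condition $H^0(X_\Omega,\O_{X_\Omega})=\Omega$ directly.
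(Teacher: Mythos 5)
Your proposal is correct and follows essentially the same route as the paper's own proof: both identify $\Pic(X_{k^{\mathrm{sep}}})$ and $\Pic(X_{k^{\mathrm{al}}})$ with points of the representable Picard scheme via the vanishing of the Brauer group of separably/algebraically closed fields, then use $H^1(X,\O_X)=0$ to conclude that this group scheme is zero-dimensional and smooth, hence \'etale, so that its $k^{\mathrm{sep}}$-points and $k^{\mathrm{al}}$-points coincide. The only cosmetic differences are that the paper works with $\underline{\Pic}_{X/k}$ over $k$ itself (citing BLR 8.1/4, 8.2/3 and 8.4/1) rather than base-changing to $k^{\mathrm{sep}}$, and that it quotes the \'etaleness statement instead of running your Lie-algebra/Nakayama argument by hand.
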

\begin{proof}
Let $\underline{\Pic}_{X/k}$ be the Picard scheme of $X$. (See \cite[Section 8.2, Theorem 3]{BLR} for the representability of $\underline{\Pic}_{X/k}$.) Since $X$ is geometrically connected over $k$, we have the following exact sequences: 
$$0\rightarrow\Pic(X_{k^{\mathrm{sep}}})\rightarrow\underline{\Pic}_{X/k}(k^{\mathrm{sep}})\rightarrow\Br(k^{\mathrm{sep}}),$$
$$0\rightarrow\Pic(X_{k^{\mathrm{al}}})\rightarrow\underline{\Pic}_{X/k}(k^{\mathrm{al}})\rightarrow\Br(k^{\mathrm{al}}),$$
see \cite[Section 8.1, Proposition 4]{BLR}. Since the Brauer groups $\Br(k^{\mathrm{sep}})$, $\Br(k^{\mathrm{al}})$ are trivial, we have $\underline{\Pic}_{X/k}(k^{\mathrm{sep}})=\Pic(X_{k^{\mathrm{sep}}})$ and $\underline{\Pic}_{X/k}(k^{\mathrm{al}})=\Pic(X_{k^{\mathrm{al}}})$. Since we are assuming $H^1(X, \O_{X})=0$, the group scheme $\underline{\Pic}_{X/k}$ is zero dimensional and smooth, and so every connected component is $\mathrm{\acute{e}}$tale over $k$; \cite[Section 8.4, Theorem 1]{BLR}. It follows that $\underline{\Pic}_{X/k}(k^{\mathrm{sep}})\rightarrow\underline{\Pic}_{X/k}(k^{\mathrm{al}})$ is an isomorphism. Hence $\Pic(X_{k^{\mathrm{sep}}}) \rightarrow \Pic(X_{k^{\mathrm{al}}})$ is an isomorphism.
\end{proof}
\begin{lem}\label{supersingular other proof}
Let $X$ be a supersingular $K3$ surface over a field $k$ of characteristic $p>0$, namely the rank of $\Pic(X_{k^{\mathrm{al}}})$ is equal to $22$. 
\begin{enumerate}
\item For $\ell \neq p$, the $\Gal(k^{\mathrm{sep}}/k)$-module $H^2_{\mathrm{\acute{e}t}}(X_{k^{\mathrm{sep}}}, \Q_{\ell}{(1)})$ is semisimple.
\item For all but finitely many $\ell \neq p$, the $\Gal(k^{\mathrm{sep}}/k)$-module $H^2_{\mathrm{\acute{e}t}}(X_{k^{\mathrm{sep}}}, \mu_{\ell})$ is semisimple and the cycle class map for $\ell$-torsion coefficients
$$\Pic(X)\rightarrow H^2_{\mathrm{\acute{e}t}}(X_{k^{\mathrm{sep}}}, \mu_{\ell})^{\Gal(k^{\mathrm{sep}}/k)}$$
is surjective.
\end{enumerate}
\end{lem}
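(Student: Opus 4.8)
The plan is to exploit the defining property of supersingularity: the geometric Picard number is $22$, which equals the second Betti number, so the whole of $H^2$ is spanned by divisor classes. First I would record that, by Lemma \ref{Picard group of K3}, the natural map $\Pic(X_{k^{\mathrm{sep}}})\to\Pic(X_{k^{\mathrm{al}}})$ is an isomorphism of $\Gal(k^{\mathrm{sep}}/k)$-modules, so $\Pic(X_{k^{\mathrm{sep}}})$ is free of rank $22$. Since this is a finitely generated module on which the Galois group acts continuously, all of its generators are defined over a single finite separable extension $k'/k$; hence the action factors through the finite quotient $G:=\Gal(k'/k)$. For $\ell\neq p$ the cycle class map $\Pic(X_{k^{\mathrm{al}}})\to H^2_{\mathrm{\acute{e}t}}(X_{k^{\mathrm{sep}}},\Z_{\ell}(1))$ is an injective, pairing-preserving map of $G$-modules, and the target is a free $\Z_\ell$-module of rank $22$ whose cup-product pairing is unimodular by Poincar\'e duality.

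The key step is a discriminant comparison. The Picard lattice $\Pic(X_{k^{\mathrm{al}}})$ has discriminant $-p^{2a}$ with $1\le a\le 10$, which is a unit in $\Z_\ell$ whenever $\ell\neq p$. Writing $L:=\Pic(X_{k^{\mathrm{al}}})\otimes_{\Z}\Z_\ell$ and $M:=H^2_{\mathrm{\acute{e}t}}(X_{k^{\mathrm{sep}}},\Z_\ell(1))$, the isometric inclusion $L\hookrightarrow M$ gives $\mathrm{disc}(L)=[M:L]^2\,\mathrm{disc}(M)$; as both discriminants are units of $\Z_\ell$ and $[M:L]$ is a power of $\ell$, we get $[M:L]=1$. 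Thus for every $\ell\neq p$ the cycle class map induces a $G$-equivariant isomorphism $\Pic(X_{k^{\mathrm{al}}})\otimes_{\Z}\Z_\ell\xrightarrow{\ \sim\ }H^2_{\mathrm{\acute{e}t}}(X_{k^{\mathrm{sep}}},\Z_\ell(1))$.

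Statement (1) then follows by tensoring this isomorphism with $\Q_\ell$: we get $H^2_{\mathrm{\acute{e}t}}(X_{k^{\mathrm{sep}}},\Q_\ell(1))\cong\Pic(X_{k^{\mathrm{al}}})\otimes_{\Z}\Q_\ell$ as $\Q_\ell[G]$-modules, and a representation of the finite group $G$ over the characteristic-zero field $\Q_\ell$ is semisimple by Maschke's theorem, for every $\ell\neq p$. For (2) I would reduce the integral isomorphism modulo $\ell$; since $H^3_{\mathrm{\acute{e}t}}(X_{k^{\mathrm{sep}}},\Z_\ell)$ is torsion-free, the universal coefficient theorem gives $H^2_{\mathrm{\acute{e}t}}(X_{k^{\mathrm{sep}}},\mu_\ell)\cong\Pic(X_{k^{\mathrm{al}}})\otimes_{\Z}\F_\ell$ as $\F_\ell[G]$-modules, which is semisimple for $\ell\nmid|G|$ by Maschke's theorem again. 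For the surjectivity of the cycle class map I would pass to $G$-invariants: the natural map $\Pic(X_{k^{\mathrm{al}}})^{G}\otimes_{\Z}\F_\ell\to(\Pic(X_{k^{\mathrm{al}}})\otimes_{\Z}\F_\ell)^{G}=H^2_{\mathrm{\acute{e}t}}(X_{k^{\mathrm{sep}}},\mu_\ell)^{G}$ is an isomorphism once $\ell\nmid|G|$, because its cokernel injects into $H^1(G,\Pic(X_{k^{\mathrm{al}}}))[\ell]$ and $H^1(G,-)$ is annihilated by $|G|$. Finally, the Hochschild--Serre sequence $0\to\Pic(X)\to\Pic(X_{k^{\mathrm{al}}})^{G}\to\Br(k)$ shows that the cokernel $Q$ of $\Pic(X)\to\Pic(X_{k^{\mathrm{al}}})^{G}$ is a finitely generated torsion group, hence finite; so $\Pic(X)\otimes_{\Z}\F_\ell\to\Pic(X_{k^{\mathrm{al}}})^{G}\otimes_{\Z}\F_\ell$ is surjective whenever $\ell\nmid|Q|$. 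Composing, the cycle class map $\Pic(X)\to H^2_{\mathrm{\acute{e}t}}(X_{k^{\mathrm{sep}}},\mu_\ell)^{G}$ is surjective for all $\ell\nmid p|G||Q|$, i.e.\ for all but finitely many $\ell\neq p$.

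The routine but delicate part is the bookkeeping of exceptional primes together with the integral input (torsion-freeness of the $\Z_\ell$-cohomology of a $K3$ surface and unimodularity of the cup product), while the one genuinely structural point is the passage from the geometric module $\Pic(X_{k^{\mathrm{al}}})$ to the arithmetic group $\Pic(X)$, which is controlled by the finite cokernel $Q$. I do not expect any single step to be a serious obstacle once supersingularity is used to identify $H^2$ with the Picard lattice.
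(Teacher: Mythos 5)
Your proposal is correct, and its overall skeleton coincides with the paper's: supersingularity gives geometric Picard rank $22$, Lemma \ref{Picard group of K3} transfers this to $\Pic(X_{k^{\mathrm{sep}}})$, the Galois action on this finitely generated module factors through a finite quotient $G=\Gal(k'/k)$, Maschke's theorem yields semisimplicity over $\Q_{\ell}$ (all $\ell\neq p$) and over $\F_{\ell}$ (for $\ell\nmid|G|$), and the passage from $\Pic(X)$ to $\Pic(X_{k^{\mathrm{sep}}})^{\Gal(k^{\mathrm{sep}}/k)}$ is controlled by the Hochschild--Serre sequence together with the fact that $\Br(k)$ is torsion. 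You differ from the paper in two supporting steps, both legitimately. First, to upgrade the cycle class map to an isomorphism $\Pic(X_{k^{\mathrm{sep}}})\otimes_{\Z}\Z_{\ell}\simeq H^2_{\mathrm{\acute{e}t}}(X_{k^{\mathrm{sep}}},\Z_{\ell}(1))$, the paper quotes the Kummer-sequence fact (the exact sequence in Skorobogatov--Zarhin) that the cokernel of the cycle class map is $\Z_{\ell}$-torsion free, so equality of ranks forces it to vanish; you instead run a lattice-theoretic discriminant comparison, using unimodularity of the cup product (Poincar\'e duality plus torsion-freeness of the cohomology of a $K3$) and the Rudakov--Shafarevich fact that the supersingular Picard lattice has discriminant $-p^{2a}$, a unit in $\Z_{\ell}$. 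Your argument leans harder on the supersingular lattice structure, while the paper's input is softer and would apply to any $K3$; both give the isomorphism for every $\ell\neq p$. Second, to compare $\Pic^{G}\otimes_{\Z}\F_{\ell}$ with $(\Pic\otimes_{\Z}\F_{\ell})^{G}$, the paper invokes finiteness of $H^1(k,\Pic(X_{k^{\mathrm{sep}}}))$ (citing Huybrechts) and discards the finitely many bad $\ell$, whereas you observe that the obstruction lies in $H^1(G,\Pic(X_{k^{\mathrm{al}}}))[\ell]$, which vanishes for $\ell\nmid|G|$ because the cohomology of a finite group is killed by its order --- a more elementary and slightly more explicit bookkeeping of the excluded primes. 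No step of your argument fails.
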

\begin{proof}
By Lemma \ref{Picard group of K3}, the rank of $\Pic(X_{k^{\mathrm{sep}}})$ is $22$. We have a $\Gal(k^{\mathrm{sep}}/k)$-equivariant isomorphism
$$\Pic(X_{k^{\mathrm{sep}}})\otimes_{\Z}{\Q_{\ell}}\simeq H^2_{\mathrm{\acute{e}t}}(X_{k^{\mathrm{sep}}}, \Q_{\ell}{(1)})$$
for any $\ell \neq p$. We take a finite Galois extension $k'/k$ such that the action of $\Gal(k^{\mathrm{sep}}/k')$ on $\Pic(X_{k^{\mathrm{sep}}})$ is trivial. Since the action of $\Gal(k^{\mathrm{sep}}/k)$ on $H^2_{\mathrm{\acute{e}t}}(X_{k^{\mathrm{sep}}}, \Q_{\ell}{(1)})$ factors through $\Gal(k'/k)$, the $\Gal(k^{\mathrm{sep}}/k)$-module $H^2_{\mathrm{\acute{e}t}}(X_{k^{\mathrm{sep}}}, \Q_{\ell}{(1)})$ is semisimple. This proves $(1)$. 

To study the torsion coefficients, we consider the cycle class map for $\Z_{\ell}$-coefficients
$$\Pic(X_{k^{\mathrm{sep}}})\otimes_{\Z}{\Z_{\ell}}\hookrightarrow H^2_{\mathrm{\acute{e}t}}(X_{k^{\mathrm{sep}}}, \Z_{\ell}{(1)})$$
for $\ell \neq p$. The cokernel of this map is a $\Z_{\ell}$-free module; see the exact sequence $(1)$ in \cite[p.11406]{Skorobogatov-Zarhin}. Hence we have a $\Gal(k^{\mathrm{sep}}/k)$-equivariant isomorphism
$$\Pic(X_{k^{\mathrm{sep}}})\otimes_{\Z}{\Z_{\ell}}\simeq H^2_{\mathrm{\acute{e}t}}(X_{k^{\mathrm{sep}}}, \Z_{\ell}{(1)})$$
for any $\ell \neq p$.
Consequently, we have
$$\Pic(X_{k^{\mathrm{sep}}})\otimes_{\Z}{\F_{\ell}}\simeq H^2_{\mathrm{\acute{e}t}}(X_{k^{\mathrm{sep}}}, \mu_{\ell})$$
for any $\ell \neq p$.
 Let $\ell$ be a prime number which does not divide $p[k':k]$.
Since the action of $\Gal(k^{\mathrm{sep}}/k)$ on $\Pic(X_{k^{\mathrm{sep}}})\otimes_{\Z}{\F_{\ell}}$ factors through $\Gal(k'/k)$, the $\Gal(k^{\mathrm{sep}}/k)$-module 
$H^2_{\mathrm{\acute{e}t}}(X_{k^{\mathrm{sep}}}, \mu_{\ell})$
is semisimple. 
Since we have the following exact sequence
$$0\rightarrow\Pic(X)\rightarrow\Pic(X_{k^{\mathrm{sep}}})^{\Gal(k^{\mathrm{sep}}/k)}\rightarrow\Br(k)$$
and $\Br(k)$ is a torsion abelian group, $\Pic(X)$ is a subgroup  of $\Pic(X_{k^{\mathrm{sep}}})^{\Gal(k^{\mathrm{sep}}/k)}$ of finite index.
Moreover, we have the following exact sequence
\begin{align*}
0& \rightarrow\Pic(X_{k^{\mathrm{sep}}})^{\Gal(k^{\mathrm{sep}}/k)}\otimes_{\Z}{\F_{\ell}}\rightarrow (\Pic(X_{k^{\mathrm{sep}}})\otimes_{\Z}{\F_{\ell}})^{\Gal(k^{\mathrm{sep}}/k)} \\
&\rightarrow H^1(k, \Pic(X_{k^{\mathrm{sep}}})). 
\end{align*}
We note that $H^1(k, \Pic(X_{k^{\mathrm{sep}}}))$ is finite; see \cite[Lemma 18.2.2]{Huybrechts}. Therefore, we have
\begin{align*}
\Pic(X)\otimes_{\Z}{\F_{\ell}}&\simeq\Pic(X_{k^{\mathrm{sep}}})^{\Gal(k^{\mathrm{sep}}/k)}\otimes_{\Z}{\F_{\ell}}\\
 &\simeq(\Pic(X_{k^{\mathrm{sep}}})\otimes_{\Z}{\F_{\ell}})^{\Gal(k^{\mathrm{sep}}/k)} \\
 &\simeq H^2_{\mathrm{\acute{e}t}}(X_{k^{\mathrm{sep}}}, \mu_{\ell})^{\Gal(k^{\mathrm{sep}}/k)}
\end{align*} for all but finitely many $\ell\neq p$. This proves $(2)$.
\end{proof}
Now, we shall prove Theorem \ref{main1}.

\begin{proof}[Proof of Theorem \ref{main1}]
Let $X$ be a $K3$ surface over a field $k$ of characteristic $2$ which is finitely generated over its prime subfield. \par
By Lemma \ref{supersingular other proof}, we may assume $X$ is not supersingular. By Proposition \ref{superspecial}, the $K3$ surface $X$ is not superspecial.
There exist a finite extension $k'$ of $k$ and an ample line bundle $\mathscr{L}$ on $X_{k'}$ such that its class in $\Pic(X_{k^{\mathrm{al}}})$ is primitive. We may assume $k'$ is a Galois extension of $k$ by Lemma \ref{Picard group of K3}. Since $X$ is not superspecial, we see that ${\mathrm{ch_{dR}}}(\mathscr{L})$ is not contained in $F^2H^2_{\mathrm{dR}}(X_{k'})$; see \cite[Corollary 1.4]{Ogus}.

By the results of Kim and Madapusi Pera \cite[Appendix A]{Kim-Madapusi}, after replacing $k'$ by its finite Galois extension, there exists an abelian variety $A$ over $k'$ such that, for any odd prime number $\ell \neq 2$, we have an embedding of $\Gal(k^{\mathrm{sep}}/k')$-modules
\begin{equation}\label{embedding}
P_{\mathscr{L}}H^2_{\mathrm{\acute{e}t}}(X_{k^{\mathrm{sep}}}, \Z_{\ell}{(1)})\hookrightarrow {\mathrm{End}}_{\Z_{\ell}}(H^1_{\mathrm{\acute{e}t}}(A_{k^{\mathrm{sep}}}, \Z_{\ell}))
\end{equation}
whose cokernel is torsion free. Here we denote the orthogonal complement to the image of $\mathscr{L}$ in $H^2_{\mathrm{\acute{e}t}}(X_{k^{\mathrm{sep}}}, \Z_{\ell}{(1)})$ by $P_{\mathscr{L}}H^2_{\mathrm{\acute{e}t}}(X_{k^{\mathrm{sep}}}, \Z_{\ell}{(1)})$. This result was proved in \cite[Proposition A.12 and Appendix A.14]{Kim-Madapusi}. (See also \cite[Theorem 5.17 (3)]{Madapusi}. For the construction of the $\ell$-adic sheaf $\mathbb{L}_{\ell}$ in \cite[Appendix A]{Kim-Madapusi}, see also \cite[Section 4.4]{Madapusi}. For the cokernel, see also \cite[p.\,11412]{Skorobogatov-Zarhin}.) The abelian variety $A$ is called the \textit{Kuga-Satake abelian variety} of the polarized $K3$ surface $(X_{k'}, \mathscr{L})$.

Using (\ref{embedding}), we can prove Theorem \ref{main1} by the same methods as in \cite{Skorobogatov-Zarhin}. We briefly sketch a proof for the readers' convenience. By (\ref{embedding}) and \cite[Proposition 4.1]{Skorobogatov-Zarhin}, we see that the $\Gal(k^{\mathrm{sep}}/k')$-module
$P_{\mathscr{L}}H^2_{\mathrm{\acute{e}t}}(X_{k^{\mathrm{sep}}}, \Z_{\ell}{(1)})\otimes_{\Z_{\ell}}{\Q_{\ell}}$
is semisimple for any odd prime number $\ell \neq 2$ and the $\Gal(k^{\mathrm{sep}}/k')$-module
$P_{\mathscr{L}}H^2_{\mathrm{\acute{e}t}}(X_{k^{\mathrm{sep}}}, \Z_{\ell}{(1)})\otimes_{\Z_{\ell}}{\F_{\ell}}$
is semisimple for all but finitely many $\ell \neq 2$. The proof of \cite[Proposition 4.1]{Skorobogatov-Zarhin} relies on the results on the Galois action on torsion points of abelian varieties due to Zarhin. (Zarhin's results are valid also in characteristic $2$; see \cite{Zarhin}.) 

Since we have a $\Gal(k^{\mathrm{sep}}/k')$-equivariant isomorphism
\[
H^2_{\mathrm{\acute{e}t}}(X_{k^{\mathrm{sep}}}, \Q_{\ell}{(1)})\simeq\Q_{\ell}\mathscr{L}\oplus (P_{\mathscr{L}}H^2_{\mathrm{\acute{e}t}}(X_{k^{\mathrm{sep}}}, \Z_{\ell}{(1)})\otimes_{\Z_{\ell}}{\Q_{\ell}}),
\]
the $\Gal(k^{\mathrm{sep}}/k')$-module $H^2_{\mathrm{\acute{e}t}}(X_{k^{\mathrm{sep}}}, \Q_{\ell}{(1)})$ is semisimple for any odd prime number $\ell \neq 2$. By \cite[Lemma 5]{Serre}, the $\Gal(k^{\mathrm{sep}}/k)$-module $H^2_{\mathrm{\acute{e}t}}(X_{k^{\mathrm{sep}}}, \Q_{\ell}{(1)})$ is also semisimple for any $\ell \neq 2$. We put $(\mathscr{L}, \mathscr{L})=2d$, where $(\ ,\ )$ is the intersection product on $\Pic(X_{k'})$ and $d \geq 1$ is a positive integer. When $2d$ is not divisible by $\ell$, we have a $\Gal(k^{\mathrm{sep}}/k')$-equivariant isomorphism
\[
H^2_{\mathrm{\acute{e}t}}(X_{k^{\mathrm{sep}}}, \mu_{\ell})\simeq\F_{\ell}\mathscr{L}\oplus (P_{\mathscr{L}}H^2_{\mathrm{\acute{e}t}}(X_{k^{\mathrm{sep}}}, \Z_{\ell}{(1)})\otimes_{\Z_{\ell}}{\F_{\ell}}).
\]
It follows that the $\Gal(k^{\mathrm{sep}}/k')$-module $H^2_{\mathrm{\acute{e}t}}(X_{k^{\mathrm{sep}}}, \mu_{\ell})$ is semisimple for all but finitely many $\ell \neq 2$. By \cite[Lemma 5]{Serre} again, if we further exclude the finitely many prime numbers dividing $[k:k']$, we see that the $\Gal(k^{\mathrm{sep}}/k)$-module $H^2_{\mathrm{\acute{e}t}}(X_{k^{\mathrm{sep}}}, \mu_{\ell})$ is semisimple for all but finitely many $\ell \neq 2$.

Let 
\[
T(X_{k^{\mathrm{sep}}})_{\ell} \subset H^2_{\mathrm{\acute{e}t}}(X_{k^{\mathrm{sep}}}, \Z_{\ell}{(1)})
\]
be the orthogonal complement to the image of $\Pic(X_{k^{\mathrm{sep}}})$ in $H^2_{\mathrm{\acute{e}t}}(X_{k^{\mathrm{sep}}}, \Z_{\ell}{(1)})$ for any $\ell \neq 2$. We have an inclusion of $\Gal(k^{\mathrm{sep}}/k')$-modules 
\[
T(X_{k^{\mathrm{sep}}})_{\ell}\hookrightarrow P_{\mathscr{L}}H^2_{\mathrm{\acute{e}t}}(X_{k^{\mathrm{sep}}}, \Z_{\ell}{(1)})
\]
whose cokernel is torsion free. By the Tate conjecture for $K3$ surfaces in characteristic $2$ \cite[Theorem A.1]{Kim-Madapusi}, \cite[Corollary 3.4]{Nygaard}, we have $(T(X_{k^{\mathrm{sep}}})_{\ell})^{\Gal(k^{\mathrm{sep}}/k')}=0$. 
For all but finitely many $\ell \neq 2$, we have 
\[(T(X_{k^{\mathrm{sep}}})_{\ell}\otimes_{\Z_{\ell}}\F_{\ell})^{\Gal(k^{\mathrm{sep}}/k')}=0\]
by \cite[Proposition 4.2]{Skorobogatov-Zarhin}. Let $d_X\in\Z$ be the discriminant of the lattice $\Pic(X_{k^{\mathrm{sep}}})$ endowed with the intersection product. When $d_X$ is not divisible by $\ell$, we have 
\[
H^2_{\mathrm{\acute{e}t}}(X_{k^{\mathrm{sep}}}, \Z_{\ell}{(1)})\simeq(\Pic(X_{k^{\mathrm{sep}}})\otimes_{\Z}\Z_{\ell})\oplus T(X_{k^{\mathrm{sep}}})_{\ell},
\]
and hence we have 
\[
H^2_{\mathrm{\acute{e}t}}(X_{k^{\mathrm{sep}}}, \mu_{\ell})\simeq(\Pic(X_{k^{\mathrm{sep}}})\otimes_{\Z}\F_{\ell})\oplus (T(X_{k^{\mathrm{sep}}})_{\ell}\otimes_{\Z_{\ell}}\F_{\ell}).
\]
It follows that for all but finitely many $\ell \neq 2$, we have 
\[
H^2_{\mathrm{\acute{e}t}}(X_{k^{\mathrm{sep}}}, \mu_{\ell})^{\Gal(k^{\mathrm{sep}}/k)}\simeq (\Pic(X_{k^{\mathrm{sep}}})\otimes_{\Z}\F_{\ell})^{\Gal(k^{\mathrm{sep}}/k)}.
\]
As in the proof of Lemma \ref{supersingular other proof}, we have 
\[
\Pic(X)\otimes_{\Z}{\F_{\ell}} \simeq (\Pic(X_{k^{\mathrm{sep}}})\otimes_{\Z}\F_{\ell})^{\Gal(k^{\mathrm{sep}}/k)}
\]
for all but finitely many $\ell \neq 2$. Therefore, we have
\[
\Pic(X)\otimes_{\Z}{\F_{\ell}} \simeq H^2_{\mathrm{\acute{e}t}}(X_{k^{\mathrm{sep}}}, \mu_{\ell}{(1)})^{\Gal(k^{\mathrm{sep}}/k)}
\]
for all but finitely many $\ell \neq 2$.
The proof of Theorem \ref{main1} is complete.\end{proof} 

\begin{rem}\label{superspecial point}
We use the notation of \cite[Appendix A]{Kim-Madapusi}. Let $M_{2d}$ be the moduli stack of quasi-polarized $K3$ surfaces and $\widetilde{M}_{2d}$ the $2$-fold $\mathrm{\acute{e}tale}$ cover of $M_{2d}$ as in \cite[Appendix A]{Kim-Madapusi}. Let $\widetilde{M}^{\mathrm{sm}}_{2d}$ be the smooth locus of $\widetilde{M}_{2d}$. We note that a superspecial $K3$ surface with an ample line bundle may not lie on the smooth locus $\widetilde{M}^{\mathrm{sm}}_{2d}$; see also \cite[Theorem 3.8]{Madapusi}. The reason why the results of Kim and Madapusi Pera do not apply to superspecial $K3$ surfaces in characteristic $2$ is we do not know whether we can extend the Kuga-Satake morphism 
\[
\iota^{\mathrm{KS}}: \widetilde{M}^{\mathrm{sm}}_{2d}\rightarrow \mathcal{S}(L_{2d})
\]
in \cite[Proposition A.12]{Kim-Madapusi} over all of $\widetilde{M}_{2d}$. Here $\mathcal{S}(L_{2d})$ is the $\textit{normal}$ integral model of the orthogonal Shimura variety $\mathrm{Sh}(L_{2d})$; see \cite[Appendix A.6]{Kim-Madapusi} for details.
\end{rem}

\begin{rem}\label{other proof}
Even if $X$ is a superspecial $K3$ surface over a field $k$ of characteristic $2$, there exist a finite separable extension $k'$ of $k$ and an ample line bundle $\mathscr{L}$ on $X_{k'}$ such that ${\mathrm{ch_{dR}}}(\mathscr{L})$ is not contained in $F^2H^2_{\mathrm{dR}}(X_{k'})$ and its class is primitive in $\Pic(X_{k^{\mathrm{al}}})$ by Lemma \ref{ample} and Lemma \ref{Picard group of K3}. Such a pair $(X_{k'}, \mathscr{L})$ lies on the smooth locus of the moduli stack $M_{2d}$ of quasi-polarized $K3$ surfaces; see \cite[Proposition 2.2]{Ogus}. Hence, after replacing $k'$ by its finite separable extension, there exists a Kuga-Satake abelian variety $A$ of $(X_{k'}, \mathscr{L})$  over $k'$ such that $(\ref{embedding})$ holds. We can use this fact to prove Theorem \ref{main1} for superspecial $K3$ surfaces in characteristic $2$ by the same methods as in \cite{Skorobogatov-Zarhin}. However, the proof in \cite{Skorobogatov-Zarhin} relies on the celebrated results on the Galois action on torsion points of abelian varieties due to Zarhin \cite{Zarhin}. We can prove Theorem \ref{main1} for superspecial $K3$ surfaces more easily as in Lemma \ref{supersingular other proof}.
\end{rem}

\section{Proof of Theorem \ref{main2}}\label{proof of main2}
Finally, we shall prove Theorem \ref{main2}. Once Theorem \ref{main1} is proved, the proof of Theorem \ref{main2} is the same as in \cite{Skorobogatov-Zarhin}. We only give a brief sketch of its proof. For details, see \cite{Skorobogatov-Zarhin2}, \cite{Skorobogatov-Zarhin}. \par
\begin{proof}[Proof of Theorem \ref{main2}]
From the Hochschild-Serre spectral sequence
$$E^{ij}_{2}=H^i(k, H^j_{\mathrm{\acute{e}t}}(X_{k^{\mathrm{sep}}}, \mathbb{G}_{m}))\Rightarrow H^n_{\mathrm{\acute{e}t}}(X, \mathbb{G}_{m}), $$
we have the following exact sequence:
$$0\rightarrow\Pic(X)\rightarrow\Pic(X_{k^{\mathrm{sep}}})^{\Gal(k^{\mathrm{sep}}/k)}\rightarrow\Br(k)\rightarrow  \Br_1(X)\rightarrow H^1(k, \Pic(X_{k^{\mathrm{sep}}})).$$
Here we put $$\Br_1(X):=\Ker(\Br(X)\rightarrow\Br(X_{k^{\mathrm{sep}}})^{\Gal(k^{\mathrm{sep}}/k)}).$$  Hence we have an injection
$$\Br(X)/\Br_1(X)\hookrightarrow \Br(X_{k^{\mathrm{sep}}})^{\Gal(k^{\mathrm{sep}}/k)}.$$
If we put $$\Br_0(X):=\Im(\Br(k)\rightarrow\Br(X)),$$ 
we have an injection
$$\Br_1(X)/\Br_0(X)\hookrightarrow H^1(k, \Pic(X_{k^{\mathrm{sep}}})).$$
We see that $H^1(k, \Pic(X_{k^{\mathrm{sep}}}))$ is finite; see \cite[Lemma 18.2.2]{Huybrechts}. Hence $\Br_1(X)/\Br_0(X)$ is finite. 
From the short exact sequence
$$0\rightarrow\Br_1(X)/\Br_0(X)\rightarrow\Br(X)/\Br_0(X)\rightarrow\Br(X)/\Br_1(X)\rightarrow0,$$
it suffices to prove that $\Br(X_{k^{\mathrm{sep}}})^{\Gal(k^{\mathrm{sep}}/k)}$ is finite modulo the $2$-primary torsion subgroup.\par
For an odd prime number $\ell \neq 2$, we have the following exact sequence:
$$0\rightarrow\Pic(X_{k^{\mathrm{sep}}})\otimes_{\Z}{\F_{\ell}}\rightarrow H^2_{\mathrm{\acute{e}t}}(X_{k^{\mathrm{sep}}}, \mu_{\ell})\rightarrow \Br(X_{k^{\mathrm{sep}}})[\ell] \rightarrow0.$$ 
Here we put $$\Br(X_{k^{\mathrm{sep}}})[\ell]:=\Ker(\Br(X_{k^{\mathrm{sep}}})\overset{\times\ell}{\rightarrow}\Br(X_{k^{\mathrm{sep}}})).$$

For all but finitely many $\ell \neq 2$,  the $\Gal(k^{\mathrm{sep}}/k)$-module 
$H^2_{\mathrm{\acute{e}t}}(X_{k^{\mathrm{sep}}}, \mu_{\ell})$
is semisimple by Theorem \ref{main1} $(2)$. 
Hence we have the following exact sequence:
\begin{align*}
0 &\rightarrow(\Pic(X_{k^{\mathrm{sep}}})\otimes_{\Z}{\F_{\ell}})^{\Gal(k^{\mathrm{sep}}/k)}\rightarrow H^2_{\mathrm{\acute{e}t}}(X_{k^{\mathrm{sep}}}, \mu_{\ell})^{\Gal(k^{\mathrm{sep}}/k)} \\
&\rightarrow (\Br(X_{k^{\mathrm{sep}}})[\ell])^{\Gal(k^{\mathrm{sep}}/k)}\rightarrow 0.
\end{align*}
For all but finitely many $\ell \neq 2$, the composition of the following maps  
$$\Pic(X)\otimes_{\Z}\F_{\ell} \rightarrow (\Pic(X_{k^{\mathrm{sep}}})\otimes_{\Z}{\F_{\ell}})^{\Gal(k^{\mathrm{sep}}/k)}\rightarrow H^2_{\mathrm{\acute{e}t}}(X_{k^{\mathrm{sep}}}, \mu_{\ell})^{\Gal(k^{\mathrm{sep}}/k)}$$ is surjective by Theorem \ref{main1} $(2)$. Hence we have $$(\Br(X_{k^{\mathrm{sep}}})[\ell])^{\Gal(k^{\mathrm{sep}}/k)}=0$$
for all but finitely many $\ell \neq 2$.   \par
On the other hand, for any odd prime number $\ell \neq 2$, the $\ell$-primary torsion subgroup of $\Br(X_{k^{\mathrm{sep}}})^{\Gal(k^{\mathrm{sep}}/k)}$ is finite by \cite[Proposition 2.5]{Skorobogatov-Zarhin2}. Here, we use Theorem \ref{main1} $(1)$ and the Tate conjecture for $X$ in $\Q_{\ell}$-coefficients \cite[Theorem A.1]{Kim-Madapusi}.\par
It is known that $\Br(X_{k^{\mathrm{sep}}})$ is a torsion abelian group; see \cite[Corollaire 1.5]{Grothendieck1} and \cite[Corollaire 2.2]{Grothendieck2}. We conclude that $\Br(X_{k^{\mathrm{sep}}})^{\Gal(k^{\mathrm{sep}}/k)}$ is finite modulo the $2$-primary torsion subgroup. \par
The proof of Theorem \ref{main2} is complete. 
\end{proof}

\section*{Acknowledgments}
The author is deeply grateful to my adviser, Tetsushi Ito, for his invaluable suggestions. He would like to thank Christian Liedtke for helpful comments. Moreover the author would like to express his gratitude to the anonymous referees for sincere remarks and comments.

\end{document}